\newenvironment{poliabstract}[1]
  {\begin{abstract}}
  {\end{abstract}}
\numberwithin{equation}{section}
\newtheorem{theorem}{Theorem}[section]
\newtheorem{lemma}[theorem]{Lemma}
\newtheorem{proposition}[theorem]{Proposition}
\newtheorem{cor}[theorem]{Corollary}
\newtheorem{rem}[theorem]{Remark}
\DeclareMathOperator{\argmin}{\mathrm{argmin}}
\renewcommand{\ge}{\geq}
\renewcommand{\le}{\leq}
\newcommand{\ind}{\mathbf{1}}
\newcommand{\R}{\mathbb{R}}
\newcommand{\Z}{\mathbb{Z}}
\newcommand{\N}{\mathbb{N}}
\renewcommand{\tilde}{\widetilde}
\newcommand{\cT}{{\ensuremath{\mathcal T}} }
\newcommand{\bP}{{\ensuremath{\mathbf P}} }
\newcommand{\bE}{{\ensuremath{\mathbf E}} }
\DeclareMathSymbol{\leqslant}{\mathalpha}{AMSa}{"36} 
\DeclareMathSymbol{\geqslant}{\mathalpha}{AMSa}{"3E} 
\DeclareMathSymbol{\eset}{\mathalpha}{AMSb}{"3F}     
\renewcommand{\leq}{\;\leqslant\;}                   
\renewcommand{\geq}{\;\geqslant\;}                   
\newcommand{\dd}{\,\text{\rm d}}             
\newcommand{\bbE}{{\ensuremath{\mathbb E}} }
\newcommand{\bbN}{{\ensuremath{\mathbb N}} }
\newcommand{\bbP}{{\ensuremath{\mathbb P}} }
\newcommand{\bbR}{{\ensuremath{\mathbb R}} }
\newcommand{\bbZ}{{\ensuremath{\mathbb Z}} }
\newcommand{\gga}{\gamma}            
\newcommand{\gd}{\delta}
\newcommand{\gep}{\varepsilon}       
\newcommand{\gD}{\Delta}
\newcommand{\go}{\omega}
\newcommand{\gl}{\lambda}
\def\captionfont@{\footnotesize}
\def\captionheadfont@{\scshape}
\long\def\@makecaption#1#2{%
  \vspace{2mm}
  \setbox\@tempboxa\vbox{\color@setgroup
    \advance\hsize-6pc\noindent
    \captionfont@\captionheadfont@#1\@xp\@ifnotempty\@xp
        {\@cdr#2\@nil}{.\captionfont@\upshape\enspace#2}%
    \unskip\kern-6pc\par
    \global\setbox\@ne\lastbox\color@endgroup}%
  \ifhbox\@ne 
    \setbox\@ne\hbox{\unhbox\@ne\unskip\unskip\unpenalty\unkern}%
  \fi
  \ifdim\wd\@tempboxa=\z@ 
    \setbox\@ne\hbox to\columnwidth{\hss\kern-6pc\box\@ne\hss}%
  \else 
    \setbox\@ne\vbox{\unvbox\@tempboxa\parskip\z@skip
        \noindent\unhbox\@ne\advance\hsize-6pc\par}%
\fi
  \ifnum\@tempcnta<64 
    \addvspace\abovecaptionskip
    \moveright 3pc\box\@ne
  \else 
    \moveright 3pc\box\@ne
    \nobreak
    \vskip\belowcaptionskip
  \fi
\relax
}
\def\writefig#1 #2 #3 {\rlap{\kern #1 truecm
\raise #2 truecm \hbox{#3}}}
\newcommand{\be}{\mathbf{e}}
\title[Brownian motion in a Poissonian
potential with long range correlation]{Superdiffusivity for Brownian motion in a Poissonian
potential with long range correlation II:
upper bound on the volume exponent}
\author{Hubert Lacoin}
\address{CEREMADE, Place du Mar\'echal De Lattre De Tassigny
75775 PARIS CEDEX 16 - FRANCE}
\email{lacoin@ceremade.dauphine.fr}
\begin{document}

\maketitle
\selectlanguage{english}
\begin{poliabstract}{Abstract}
This paper continues a study on trajectories of Brownian Motion in a field of soft trap
whose radius distribution is unbounded. We show here that for both point-to-point and point-to-plane model
the volume exponent (the exponent associated to transversal fluctuation of the trajectories) $\xi$ is strictly less than $1$
 and give an explicit upper bound that depends on the parameters of the problem.
In some specific cases, this upper bound matches the lower bound proved in the first part of this work 
and we get the exact value of the volume exponent.\\
 2000 \textit{Mathematics Subject Classification: 82D60, 60K37, 82B44}
  \\
  \textit{Keywords: Streched Polymer, Quenched Disorder, Superdiffusivity, Brownian Motion, Poissonian Obstacles, Correlation.}
\end{poliabstract}

\selectlanguage{frenchb}
\begin{poliabstract}{Résumé}
Cet article est la seconde partie d'une étude sur les trajectoires Brownienne dans un champs de pièges mous dont 
le rayon est aléatoire et a une distribution non-bornée. 
Nous montrons que l'exposant de volume (qui est l'exposant associé aux fluctuations transversales des trajectoires) $\xi$
est strictement inférieur à $1$ et nous donnons une borne supérieure explicite qui dépend des paramètres du problème, et ceci aussi bien 
pour le modèle dans la configuration point-à-point que pour celui dans la configuration point à plan.
Dans certains cas particulier, cette borne supérieure coïncide avec la borne inférieure démontrée dans la première partie 
de cette étude, ce qui nous permets d'identifier la valeur de l'exposant de volume.\\
\textit{Mots clés: Polymères faiblements dirigés, Désordre gelé, Surdiffusivité, Mouvement brownien, Obstacles poissonien, 
Correlation.}
\end{poliabstract}

\selectlanguage{english}

\section{Introduction}

In this paper we investigate properties of the trajectories of Browian Motion in a disordered medium:
given a random function $V$ defined on $\bbR^d$ and $\gl> 0$,
we study trajectories of a Brownian motion $(B_t)_{t\ge 0}$
killed at (space-dependent) rate $\gl + V (B_t)$ conditioned to survive up to the hitting
time either of a distant hyperplane or a distant ball (we refer to these two cases respectively as point-to-plane and point-to-point).
We focus more specifically on transversal fluctuation, i.e.\ fluctuation of the trajectories along the directions that are normal to 
the line that links the two points.

\medskip

In an homogeneous medium these fluctuation are of order $\sqrt{L}$ where $L$ is the distance to the hyperplane or point.
It is commonly believed that disorder should make these fluctuation larger, \ e.g.\ of order $L^\xi$ where $\xi>1/2$ is called
volume exponent. This phenomenon is called superdiffusivity and should hold for low dimension ($d\le3$) or when amplitude of the variations of $V$ are large enough. 
We study it in a model where the random potential $V$ is generated by a field of soft trap of random IID radii.
The tail distribution of the radius of a trap is heavy-tailed so that our potential presents long range correlation.
This model is a variant of a more studied model of Brownian motion among soft obstacles extensively studied by Snitzman
(see the monograph \cite{cf:S} and reference therein) and for which superdiffusivity was shown to hold in dimension $2$ by Wüthrich ($\xi\ge 3/5$, \cite{cf:W2})
who also proved  a universal bound $\xi\le 3/4$, valid for any dimension \cite{cf:W3}.

\medskip

For our model with correlated potential, we proved in \cite{cf:H1} that superdiffusivity holds when $d=2$ and
in larger dimension when correlations in the environment are strong enough (see \eqref{fredh}). 
The lower-bound that we get for $\xi$ depends on the parameter
of the model and in certain cases it is larger than $3/4$ (which is an upper bound for the volume exponent in any dimension in a large variety of model
in the same universality class
see e.g. \cite{cf:Meja} for directed polymer, and \cite{cf:Lac} for directed Brownian Polymer in an environment with long-range transversal correlation).

\medskip

In this paper, our aim is to find an upper-bound for the volume exponent $\xi$.
It turns out that for some particular choices of the parameter, the upper bound one finds for $\xi$ matches the lower bound found in \cite{cf:H1} and
therefore allows us to derive the existence and exact value of the volume exponent (Corollary \ref{tyui}). 

\medskip

It is quite rare to be able to derive volume exponent for disordered model, even at the level of physicists prediction. 
For the two dimensional model studied in \cite{cf:W2} and a whole class of related random growth model (e.g.\ two dimensional first-passage percolation, 
oriented first-passage percolation and directed polymer in random environment in $1+1$ dimension) it is predicted that $\xi=2/3$ and it has
been proved in very particular cases (\cite{cf:J,cf:Sep,cf:BQS} and some more). These works have in common that they rely on exact calculation and 
therefore cannot be exported to general cases yet.

\medskip

Here, lower-bound and upper-bound are both derived using energy v.s.\ entropy comparisons, and
 the reason why we are able to get the exact exponent is 
somehow different. When the tail distribution of radiuses of traps gets heavy, most of the fluctuation are caused by very large traps
and this makes the system almost ``one-dimensional'' in a sense, and therefore easier to handle.

\section{Model and result}

Let $V^{\go}(x),x\in \bbR^d$ be a random potential defined as follows: we consider first a Poisson Point Process, in $\R^d \times \R^+$, viewed as a set of points 

\begin{equation}
 \go:= \{ (\go_i,r_i)\in \bbR^d\times \bbR_+ \ | \ i\in \N\} 
\end{equation}
(the ordering of the points $(\go_i,r_i)$ being made in some arbitrary deterministic way, e.g. such that $|\go_i|$ is an increasing sequence),
whose intensity is given by $\mathcal L \times \nu$ where $\mathcal L$ 
is the Lebesgue measure on $\bbR^d$ and $\nu$ is a probability measure on $\R^+$ . 
For the sake of simplicity we restrict to the case of $\nu$ satisfying
\begin{equation}
\forall r\ge 1,\  \nu([r,\infty])=r^{-\alpha},
\end{equation}
for some $\alpha>0$ (but the result would hold with more generality, e.g.\ assuming only that $\nu$ has power-law decay at infinity).
Denote by $\bbP$ and $\bbE$ the associated probability law and expectation.

\medskip
This process represents a field random traps centered at $\go_i$ of and radius $r_i$.
From $\go$ we construct the potential $V^\go: \bbR^d \rightarrow \bbR_+\cup \infty$ defined by
\begin{equation}
 V^{\go}(x):=\sum_{i=1}^{\infty} r_i^{-\gamma} \ind_{\{|x-\go_i|\le r_i\}},
\end{equation}
for some $\gamma>0$.
Note that  $V^{\go}(x)<\infty$ for every $x\in \bbR^d$, for almost every realization of $\go$ if and only if the condition
$\alpha+\gga-d>0$ holds. 
We suppose in what follows
that we have the stronger condition $\alpha-d>0$ (mainly not too have to treat too many different cases in the proof, but we could have results without this condition) 
which means that a given point lies almost surely in finitely many traps.

\medskip

Given $L>0$,
we consider the hyperplane $\mathcal H_L$ at distance $L$ from the origin.
\begin{equation}
\mathcal H_L:= \{L\}\times \bbR^{d-1}
\end{equation}
 Denote by $\bP$, $\bE$ (resp.\ $\bP_x$, $\bE_x$) the law and expectation associated 
to standard $d$-dimensional Brownian motion $(B_t)_{t\ge 0}$ started from the origin (resp.\ from $x$).

\medskip

Given $\gl>0$ we study the trajectories of a Brownian Motion started from the origin killed with rate $(V^{\go}(\cdot)+\gl)$ 
conditioned to survive till it hits $\mathcal H_L$.  
The survival probability is equal to 

\begin{equation}\label{defZ}
 Z_{L}^{\go}:= \bE\left[\exp\left(-\int_{0}^{T_{\mathcal H_L}} (V^{\go}(B_t)+\gl)\dd t\right)\right],
\end{equation}
(For any set $A$, $T_{A}$ denotes the hitting time of $A$).
The law of the trajectories conditioned to survival $\mu_L^{\go}$ is absolutely continuous with respect to $\bP$, and its density is  given by
\begin{equation}\label{defmu}
 \frac{\dd \mu_L^{\go}}{\dd \bP}(B):= 
\frac{1}{Z^\go_L}\exp\left(-\int_{0}^{T_{\mathcal H_L}}(V^{\go}(B_t)+\gl)\dd t\right) .
\end{equation}

To study transversal fluctuation of the trajectory around the 
axis $\bbR \be_1$ ($\be_1=(1,0,\dots,0)$ being the first coordinate vector),
one has to give a true definition to the notion of volume exponent 
discussed in the introduction.
In that aim, define

\begin{equation}
\mathcal C_L^{\xi}:= \{z\in \bbR^d \ | \ \exists \alpha\in[0,L],
 |z-\alpha \be_1|\le L^\xi \}=\bigcup_{\alpha\in[0,L]} B(\alpha \be_1, L^\xi).
\end{equation}
and 
\begin{equation}
\mathcal A_L^{\xi}:= \{ (B_t)_{t\ge0} \ | \forall s\in [0, T_{\mathcal H_L}], B_s\in \mathcal C_L^{\xi}\}.
\end{equation}
the event ``the trajectories stays in the tube $\mathcal C_L^{\xi}$ till the hitting time of $\mathcal H_L$''.
We define the upper and lower volume exponent $\xi_0$ and $\xi_1$ as follows:
 
\begin{equation}\begin{split}
\xi_1:=\inf\{\xi \ \ |  \  \lim_{L\to\infty}  \bbE\left[\mu_L^{\go}(\mathcal A_L^{\xi})\right]=1\},\\
\xi_0:=\sup\{\xi \ \ |  \  \lim_{L\to\infty}  \bbE\left[\mu_L^{\go}(\mathcal A_L^{\xi})\right]=0\}.
\end{split}\end{equation}
From the definition, $\xi_1\ge \xi_0$ but one expects that $\xi_1=\xi_0$ and their common value is 
referred to as the volume exponent.
The main result of this paper is to get an upper-bound on $\xi_1$.
Set
\begin{equation}
\tilde  \xi(\alpha,\gamma,d):=\max\left(\frac 3 4, \frac{1}{1+\alpha-d}, \min\left(\frac{1}{1+\gamma},\frac{2+d}{2\alpha}\right)\right)<1.
\end{equation}

\begin{theorem}\label{turiaf1}
For all $\xi> \tilde  \xi(\alpha,\gamma,d)$
One has 
\begin{equation}
 \lim_{L\to\infty}  \bbE\left[\mu_{L}^{\go}(\mathcal A_L^{\xi})\right]=1.
\end{equation}
Or equivalently $\xi_1\le \tilde \xi$.
\end{theorem}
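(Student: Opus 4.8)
The plan is to compare the "entropic cost" of confining the Brownian trajectory to a thin tube $\mathcal C_L^\xi$ against the "energetic gain" it could hope to get by wandering outside it. The fundamental identity is that $\bbE[\mu_L^\go(\mathcal A_L^\xi)] = \bbE[\bE[e^{-\int_0^{T_{\mathcal H_L}}(V^\go(B_t)+\gl)\dd t}\ind_{\mathcal A_L^\xi}]/Z_L^\go]$, so it suffices to show that the numerator is, with overwhelming probability, asymptotically comparable to $Z_L^\go$, i.e.\ that the contribution to $Z_L^\go$ of trajectories that exit $\mathcal C_L^\xi$ is negligible. I would slice the set of "bad" trajectories according to the maximal transversal displacement they reach, say between $2^k L^\xi$ and $2^{k+1}L^\xi$, and bound the partition function restricted to each slice. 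The key inequality is a lower bound $Z_L^\go \ge e^{-c L}$ (coming from a fixed straight-tube strategy, which costs $O(L)$ in $\gl$-killing plus the potential accumulated in a tube of bounded width, controlled because $\alpha - d > 0$ makes the expected potential in a bounded tube of length $L$ of order $L$), against which the restricted bad partition functions must be shown to be $e^{-\omega(L)}$ typically, or at least summably small in expectation after removing an event of small probability.

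**Next**, to bound the bad partition function I would use the standard coarse-graining / chaining argument: a trajectory reaching transversal height $h = 2^k L^\xi$ must traverse, back and forth, a transversal distance of order $h$, which by Brownian estimates costs at least $\exp(-c' h^2 / (\text{length})) $ in probability for travelling that detour, but more importantly the \emph{detour itself has length at least of order $h^2/L$ extra} only if $h \gg \sqrt L$; the real mechanism here is different. The honest mechanism, as the introduction hints, is \emph{energetic}: a trajectory staying in the straight tube pays the potential coming from all traps it meets, and the only way a long excursion to height $h$ pays off is if there is an unusually large trap (radius $\gtrsim h$) near the axis whose interior the straight trajectory would be forced to cross and pay $r^{-\gamma}\cdot(\text{chord length})$, which the detour avoids. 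So I would split into two regimes governed by which term in $\min(\frac{1}{1+\gamma},\frac{2+d}{2\alpha})$ is active. In the regime $\xi > \frac{1}{1+\gamma}$: a trap of radius $r$ forces a potential cost $r^{-\gamma}\cdot r = r^{1-\gamma}$ if crossed diametrically, so avoiding it is only worth an entropic detour of order its size when $r^{1-\gamma} \gtrsim$ detour cost; balancing a detour to height $r$ (entropic cost $\sim r^2/L$ by Brownian scaling if $r \le L$, or the trajectory simply cannot be that long) against $r^{1-\gamma}$ gives the threshold; one checks that for $\xi$ above $\frac{1}{1+\gamma}$ the expected number of such "useful" large traps within the relevant window vanishes, using $\nu([r,\infty]) = r^{-\alpha}$ and a union bound over dyadic scales and over the $O(L^{1-?})$ possible centers. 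In the regime controlled by $\frac{2+d}{2\alpha}$: here one counts, via the intensity $\mathcal L\times\nu$, the probability that \emph{any} trap of radius exceeding $L^\xi$ exists within transversal distance $O(L^\xi)$ of the segment $[0,L]\be_1$ — this probability is of order $L\cdot (L^\xi)^{d-1}\cdot (L^\xi)^{-\alpha} = L^{1+\xi(d-1-\alpha)}$, which tends to $0$ precisely when $\xi > \frac{1}{\alpha-d+1}$ for the "center must be outside" part and a companion computation in $(d+1)$-space (Lebesgue in $\R^d$ times $\nu$ in $r$) giving the exponent $\frac{2+d}{2\alpha}$ for the event "a big trap that would actually matter exists"; off that rare event, no trap of radius $\gtrsim L^\xi$ is relevant and the confined and unconfined partition functions differ by $(1+o(1))$.

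**The third ingredient** is the purely entropic (Brownian) bound accounting for the $\max(3/4, \dots)$: even with no helpful large trap, a trajectory could in principle fluctuate by the "free Brownian" amount $\sqrt L \ll L^{3/4}$, so the $3/4$ floor is not entropic in the naive sense — it is the universal Wüthrich-type bound, obtained by the argument that if transversal fluctuations were of order $L^\xi$ then on scale $\ell$ along the axis the trajectory fluctuates $\sim \ell^\xi$ transversally, costing entropy $\sim \ell^{2\xi-1}$ per block and $\sim L\ell^{2\xi-2}$ total, which must be beaten by a potential energy \emph{fluctuation} of order $\sqrt{L/\ell}\cdot(\text{typical energy per block})$; optimizing $\ell$ and matching exponents forces $\xi \le 3/4$. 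I would import this essentially verbatim from \cite{cf:W3}, checking that the bounded-width-tube potential in our correlated model still has the requisite concentration (which it does off a rare event, since only traps of radius $O(1)$ contribute in the generic tube once the large-trap event is excluded).

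**The main obstacle** I anticipate is the bookkeeping that glues these three regimes together into the single threshold $\tilde\xi$ with the correct $\min$ and $\max$ structure: one must run the dyadic-scale union bound over excursion heights $h$ simultaneously with a dyadic union bound over trap radii $r$, and verify that in \emph{every} $(h,r)$ cell either the entropic detour cost dominates the energetic saving (so those trajectories do not contribute to $Z_L^\go$) or the cell is typically empty of traps, with all the exceptional probabilities summing to $o(1)$. The delicate point is the crossover trap radius $r\sim L$, where "detour to height $r$" stops costing $r^2/L$ and starts being geometrically impossible (a path hitting $\mathcal H_L$ has total length concentrated around $L$, but large deviations allow length up to $\sim r$), so the large-deviation cost of a long path must be weighed against $\gl\cdot(\text{length})$ — this is where the $\frac{2+d}{2\alpha}$ exponent, rather than $\frac{1}{1+\gamma}$, becomes the binding constraint, and getting the exact constant requires care. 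I expect the proof to proceed: (i) reduce to bounding $\sum_k Z_L^{\go}(\text{height}\in[2^kL^\xi,2^{k+1}L^\xi])$; (ii) establish $Z_L^\go \ge e^{-CL}$ off a negligible event; (iii) for each dyadic height, either produce a deterministic energetic/entropic lower bound on its exponential cost via a trap-crossing argument, or show the relevant large trap is typically absent; (iv) sum, and separately invoke the $3/4$ bound; (v) collect exceptional events.
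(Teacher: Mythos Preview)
Your proposal misses the central mechanism of the proof and substitutes for it a heuristic that belongs to the \emph{lower} bound argument, not the upper bound. The paper's proof does not proceed by asking whether a ``useful large trap'' exists near the axis and then arguing that, absent such a trap, there is no reason to exit the tube. The real danger is not a single large trap but the aggregate random fluctuation of $\log Z$ coming from \emph{all} trap scales: even with no trap of radius $\ge L^\xi$ present, the environment outside the tube could, by chance, be more favorable than inside. Your step~(iii) (``either deterministic energetic/entropic bound, or the relevant large trap is typically absent'') therefore fails: absence of large traps does not give a deterministic comparison of inside versus outside. What is actually needed is a concentration inequality of the form
\[
\bbP\bigl(|\log \bar Z^\go(u,v)-\bbE\log\bar Z^\go(u,v)|\ge L^{\chi+\gep}\bigr)\le e^{-L^\delta},
\]
with an explicit fluctuation exponent $\chi=\chi(\xi)$, after which the argument is geometric: a trajectory reaching transversal height $L^\xi$ must cover extra Euclidean distance $\gtrsim L^{2\xi-1}$, hence pay $\sim cL^{2\xi-1}$ in expected $\log Z$ (this itself requires a lemma comparing $\alpha(r+l)$ to $\alpha(r)$, which also rests on concentration), and the condition $\xi>\tilde\xi$ is exactly what forces $2\xi-1>\chi(\xi)$. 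The various pieces of $\tilde\xi$ arise from the terms in $\chi$ and from the parameters of the potential modification, not from trap-counting thresholds as you suggest.

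You also cannot ``import essentially verbatim'' the Sznitman--W\"uthrich concentration and $3/4$ bound: the paper is explicit that those martingale techniques do not apply directly to the correlated potential $V^\go$, and indeed for some parameters $\xi_0>3/4$. The way around this is to first replace $V^\go$ by a modified potential $\bar V^\go_L$ (discarding traps of radius larger than $2L^{\bar\xi}$ and capping the contribution of each dyadic scale at $2^{-n\gamma}\log L$), show this does not decrease $\mu_L^\go(\mathcal A_L^\xi)$ by more than a negligible amount, and then run a genuinely new multiscale martingale decomposition: filter by trap radius scale $2^n$, and within each scale filter by cubes of side $2^n$, applying Kesten's concentration theorem scale by scale. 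The exponent $\chi$ comes out of this analysis. None of this machinery appears in your outline, and without it the argument does not close.
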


In some special case, when $(\alpha-d)=\gamma\le 1/3$, then the upper bound above 
coincides with the lower-bound proved in a first study on this model \cite[Theorem 2.1]{cf:H1} 

\begin{equation}\label{fredh}
 \xi_0\ge \min\left(\frac{1}{2},\frac{1}{1+\alpha-d},\frac{3}{3+2\gamma+\alpha-d}\right),
\end{equation}
(to be more precise the definition of $\xi_0$ in \cite{cf:H1} is a bit different because the set $\mathcal C^\xi_L$ there is not exactly the same, but 
Theorem 2.1 there implies \eqref{fredh}).
And therefore

\begin{cor}\label{tyui}
For any value of $\alpha,d$ and $\gamma$ that satisfies
$(\alpha-d)=\gamma\le 1/3$
, one has
\begin{equation}
\xi_1=\xi_0=\frac{1}{1+\gamma}.
\end{equation}
\end{cor}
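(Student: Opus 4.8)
\emph{Proof strategy for Corollary \ref{tyui}.}
The corollary is an arithmetic consequence of Theorem \ref{turiaf1} together with the lower bound of \cite{cf:H1}: under the hypothesis $\alpha-d=\gamma\le 1/3$ both bounds collapse to $1/(1+\gamma)$, so nothing is needed beyond two short computations and the trivial inequality $\xi_0\le\xi_1$.

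First I would specialize $\tilde\xi(\alpha,\gamma,d)$ to this regime. Since $\gamma\le 1/3$ we have $\tfrac{1}{1+\gamma}\ge\tfrac34$; since $\alpha-d=\gamma$ we have $\tfrac{1}{1+\alpha-d}=\tfrac{1}{1+\gamma}$; and $\min\!\big(\tfrac{1}{1+\gamma},\tfrac{2+d}{2\alpha}\big)\le\tfrac{1}{1+\gamma}$ regardless of the value of $(2+d)/(2\alpha)$. Substituting into the definition of $\tilde\xi$,
\[
\tilde\xi(\alpha,\gamma,d)=\max\!\left(\frac{3}{4},\ \frac{1}{1+\gamma},\ \min\!\left(\frac{1}{1+\gamma},\frac{2+d}{2\alpha}\right)\right)=\frac{1}{1+\gamma},
\]
and Theorem \ref{turiaf1} then gives $\xi_1\le 1/(1+\gamma)$.

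Next I would extract the matching lower bound from \cite[Theorem 2.1]{cf:H1}. Setting $\alpha-d=\gamma$ makes the two parameter-dependent terms of the bound equal, $\tfrac{1}{1+\alpha-d}=\tfrac{1}{1+\gamma}$ and $\tfrac{3}{3+2\gamma+\alpha-d}=\tfrac{3}{3+3\gamma}=\tfrac{1}{1+\gamma}$, so that the lower bound of \cite{cf:H1} specializes to $\xi_0\ge 1/(1+\gamma)$ in this regime. The one point that needs attention is that \cite{cf:H1} uses a confinement tube slightly different from $\mathcal C_L^\xi$; as the remark after \eqref{fredh} indicates, this changes only lower-order corrections to the tube width, hence not the volume exponent, so the lower bound carries over to the present normalization. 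I would expect this reconciliation of conventions to be the only step requiring more than one line, and it is routine.

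Finally, $\mathcal A_L^\xi$ is nondecreasing in $\xi$, so $\xi_0\le\xi_1$ directly from the definitions, and combining the three inequalities,
\[
\frac{1}{1+\gamma}\ \le\ \xi_0\ \le\ \xi_1\ \le\ \frac{1}{1+\gamma},
\]
which forces $\xi_0=\xi_1=1/(1+\gamma)$. The genuine work lives upstream, in Theorem \ref{turiaf1} and in \cite{cf:H1}; here there is essentially only bookkeeping.
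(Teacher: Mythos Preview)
Your approach is exactly the paper's: the corollary is obtained by specializing both the upper bound $\tilde\xi$ of Theorem~\ref{turiaf1} and the lower bound \eqref{fredh} from \cite{cf:H1} to the regime $\alpha-d=\gamma\le 1/3$ and observing they both collapse to $1/(1+\gamma)$. The paper offers no proof beyond the sentence immediately preceding the corollary; you have simply written out the arithmetic, and your computation of $\tilde\xi=1/(1+\gamma)$ is correct.

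One point you gloss over: as \eqref{fredh} is literally written, the lower bound is a \emph{minimum} that includes the constant $1/2$. Since $\gamma\le 1/3$ forces $1/(1+\gamma)\ge 3/4>1/2$, the face-value reading of \eqref{fredh} yields only $\xi_0\ge 1/2$, not $\xi_0\ge 1/(1+\gamma)$. Your sentence ``the lower bound of \cite{cf:H1} specializes to $\xi_0\ge 1/(1+\gamma)$'' silently drops that term. The paper itself asserts the bounds coincide, so presumably the statement in \cite{cf:H1} is sharper than its transcription in \eqref{fredh} (or \eqref{fredh} contains a typo, e.g.\ $\max$ for $\min$); either way, you should flag the discrepancy and point directly to \cite[Theorem~2.1]{cf:H1} for the needed inequality rather than rely on \eqref{fredh} as quoted.
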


A much related problem is the the study of trajectories conditioned to survive up to the hitting time of a distant ball.
We introduce this model now for two reason:
\begin{itemize}
 \item  We use it as a tool for the proof of the result above.
 \item  An analogous result can be proved using the same method for this model.
\end{itemize}

For a Brownian Motion started at $x$ and killed with rate $\gl+V(\cdot)$, we denote by
\begin{equation}\label{defZ1}
Z^\go (x,y):= \bE_x \left[e^{-\int_0^{T_{B(y)}} (\gl+V(B_t)) \dd t }\ind_{\{T_{B(y)}<\infty\}}\right],
\end{equation}
the probability of survival up to the hitting time of $T_{B(y)}$ of $B(y)=B(y,1)$ the Euclidean ball of radius one and center
 $y$, $|x-y|\ge 1$ (we keep this notation for what follows and denote by
$B(z,r)$ the Euclidean ball of center $z$ radius $r\in \bbR_+$) , and by $\mu_{x,y}^\go$ the law of the trajectory
$(B_t)_{t\in[0,T_{B(y)}]}$ conditioned to survival, its derivative with respect to $\bP_x$ is equal to
\begin{equation}\label{defmu1}
\frac{\dd \mu_{x,y}^{\go}}{\dd \bP_x}:=\frac{1}{Z^\go (x,y)}e^{-\int_0^{T_{B(y)}} (\gl+V(B_t)) \dd t }\ind_{\{T_{B(y)}<\infty\}}.
\end{equation}
For this reason, for a given $y\in \bbR^d$ one defines in analogy with $\mathcal A_L^\xi$ and $\mathcal C_L^{\xi}$.
\begin{equation}
\mathcal C_y^{\xi}:= \left\{z\in \bbR^d \ | \ \exists \alpha\in [0,1],
 |z-\alpha y|\le |y|^\xi \right\}=\bigcup_{\alpha\in [0,1] } B(\alpha y, |y|^\xi).
\end{equation}
and 
\begin{equation}
\mathcal A_y^{\xi}:= \{ (B_t)_{t\ge0} \ \big| \forall s\in [0, T_{B(y)}], B_s\in \mathcal C_y^{\xi}\}.
\end{equation}
The following analogous of Theorem \ref{turiaf1}
\begin{theorem}\label{turiaf}
For all $\xi> \tilde  \xi(\alpha,\gamma,d)$,
one has 
\begin{equation}
 \lim_{|y|\to\infty}  \bbE\left[\mu_{0,y}^{\go}(\mathcal A_y^{\xi})\right]=1.
\end{equation}
\end{theorem}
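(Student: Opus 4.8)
The plan is to show that, on a family of Poisson environments of probability tending to $1$, the partition function confined to the tube is (quenchedly) comparable to the full one; the mechanism is the energy--versus--entropy coarse-graining that also drives Theorem \ref{turiaf1}, and the point-to-point estimate is in fact the more elementary building block. Fix $\xi>\tilde\xi(\alpha,\gamma,d)$ and write $L=|y|$. Let $Z^{\go}_{\xi}(0,y)$ be the integral in \eqref{defZ1} restricted to trajectories that stay in $\mathcal C_y^{\xi}$; since $\mathcal C_y^{\xi}\supset B(y,1)$ and $Z^{\go}_{\xi}\le Z^{\go}$, one has $\mu_{0,y}^{\go}((\mathcal A_y^{\xi})^c)=1-Z^{\go}_{\xi}(0,y)/Z^{\go}(0,y)\le Z^{\go}(0,y)/Z^{\go}_{\xi}(0,y)-1$. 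So it suffices to exhibit a good event $G_L$ on the Poisson configurations with $\bbP(G_L)\to1$, on which $Z^{\go}(0,y)/Z^{\go}_{\xi}(0,y)\le 1+e^{-\psi(L)}$ for some $\psi(L)\to\infty$; averaging and bounding the ratio by $1$ off $G_L$ then gives $\bbE[\mu_{0,y}^{\go}((\mathcal A_y^{\xi})^c)]\to0$, which is the assertion.

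The first auxiliary fact I would prove is a local ``straight tube'' lower bound: for $z,z'$ at longitudinal distance $\Theta(\ell)$ and transversal distance $O(L^{\xi})$ from the axis, the mass of Brownian paths from $z$ to $z'$ staying inside $\mathcal C_y^{\xi}$ is at least $e^{-c_1\ell}$ on $G_L$, with $c_1=c_1(\lambda,\alpha,\gamma,d)$ explicit. This uses that $\bbE[V^{\go}(x)]$ is a finite constant---here $\alpha+\gamma>d$, implied by $\alpha>d$, is used---together with a second-moment estimate for $\int V^{\go}(B_t)\dd t$ along the tube, after excising the rare longitudinal slabs carrying an atypically large trap or an atypically dense pile-up of traps, which are bridged by a short local detour. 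Its role is to allow comparing the ``free'' end-pieces of a decomposed trajectory to tube pieces, up to bounded multiplicative errors.

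The decisive step is the energy--versus--entropy estimate. Tile $[0,y]$ into $N\asymp L/\ell$ slabs $S_j$ of longitudinal length $\ell$ (a power of $L$ to be optimised), put $R_j=\mathcal C_y^{\xi}\cap S_j$, and call $S_j$ \emph{good} if (a) no trap of radius exceeding the tube width meets an enlargement $R_j^{+}$ of $R_j$, (b) no point of $R_j^{+}$ lies in more than a fixed number of traps, (c) the potential integrated along any straight crossing of $R_j$ is $\le 2c_1\ell$. The claim is that on a good slab the mass of path-pieces that enter $S_j$ at transversal distance $\le L^{\xi}$ and attain transversal distance $>L^{\xi}$ is smaller than the mass of the corresponding straight crossings by a factor $e^{-\psi(L)/N}$. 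Decomposing a trajectory in $(\mathcal A_y^{\xi})^c$ at the slab where it first exits $\mathcal C_y^{\xi}$, applying the strong Markov property at the entrance/exit times of that slab, using the previous paragraph to handle the end-pieces, and summing over $j$ then yields $Z^{\go}/Z^{\go}_{\xi}-1\le Ne^{-\psi(L)/N}$ on $G_L=\bigcap_j\{S_j\ \mathrm{good}\}$. The condition $\bbP(G_L)\to1$ and the constraint $\xi>\tilde\xi$ appear simultaneously: using $\nu([r,\infty])=r^{-\alpha}$ with $\alpha>d$, the expected number of traps violating (a) in a slab is $\asymp\ell\,(L^{\xi})^{d-1-\alpha}$, which is $o(1/N)$ exactly for $\xi>\frac1{1+\alpha-d}$; the exponents $\frac1{1+\gamma}$ and $\frac{2+d}{2\alpha}$ come out of balancing the entropic price $\gtrsim h^{2}/(\text{span})$ of a transversal bulge of height $h\asymp L^{\xi}$ against the potential that a trajectory can shed by reaching a neighbouring tube, whose fluctuation is governed by the largest trap present (crossing a radius-$r$ trap costs energy $\asymp r^{1-\gamma}$, and $\frac{2+d}{2\alpha}$ is the counting threshold below which a good slab may still host one trap large enough for a local detour around it to be energetically relevant); and the universal ceiling $\tfrac34$ is furnished by the complementary Wüthrich-type argument---the point-to-point free energy has endpoint curvature $\gtrsim L^{2\xi-1}$ but $O(\sqrt L)$ fluctuations, the latter via an Efron--Stein/martingale variance bound adapted to the unbounded but locally integrable $V^{\go}$ (cf. \cite{cf:W3})---which is needed precisely when the first two thresholds fall below $\tfrac34$. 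Optimising $\ell$ against all of these constraints gives the admissible range $\xi>\tilde\xi$.

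\textbf{Expected main obstacle.} The hard part is the per-slab estimate: a quantitative \emph{quenched} lower bound on the price an \emph{undirected} Brownian trajectory pays for a transversal bulge of size $L^{\xi}$ through a good slab. Undirectedness rules out a clean transfer-operator / one-dimensional reduction and forces delicate bookkeeping with the strong Markov property at slab boundaries (and with the possibility of several excursions out of the tube); the heavy tails make $\bbE_{\go}[e^{-\int V^{\go}}]$ incomparable to $e^{-c\,(\mathrm{duration})}$ on atypical environments, which is exactly what condition (a) and the exponent $\frac1{1+\alpha-d}$ are designed to quarantine. Making this estimate sharp all the way down to $\xi=\tilde\xi$---so that it meets the lower bound of \cite{cf:H1} in the regime of Corollary \ref{tyui}---is the delicate point.
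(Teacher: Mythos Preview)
Your proposal takes a substantially different route from the paper, and the step you yourself flag as ``the hard part'' is a genuine gap rather than a technicality.

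The paper does \emph{not} argue slab by slab. It proceeds globally in three steps. First (Proposition~\ref{micmac}) it replaces $V^\go$ by a truncated potential $\bar V^\go_L$ that discards traps of radius $>2L^{\bar\xi}$ (with $\bar\xi=\min(\xi,d/\alpha)$) and caps each dyadic scale at $2^{-n\gamma}\log L$; the threshold $\tfrac{1}{1+\alpha-d}$ is exactly the condition that, with high probability, no discarded trap touches $\mathcal C_y^\xi$. Second (Proposition~\ref{concentrat}) it proves a concentration inequality
\[
\bbP\bigl(|\log\bar Z^\go(u,v)-\bbE\log\bar Z^\go(u,v)|\ge L^{\chi+\gep}\bigr)\le e^{-L^\delta},
\qquad \chi(\xi)=\max\Bigl(\tfrac12,\ (1-\gamma)\bar\xi,\ \tfrac12\bigl(1+\bar\xi(1+d-2\gamma-\alpha)\bigr)\Bigr),
\]
via a multiscale martingale (one filtration level per dyadic trap radius) and Kesten's concentration theorem. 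Third (Section~\ref{super}) it covers $\partial\mathcal C_y^\xi$ by polynomially many unit balls $B(x_i,1)$, applies the strong Markov property at the first exit from the tube, and uses concentration on every piece: the route through $x\in\partial\mathcal C_y^\xi$ costs $\alpha(|x|)+\alpha(|y-x|)$ in expectation versus $\alpha(L)$ for the direct route, where $\alpha(r)=-\bbE\log\bar Z^\go(0,y_r)$. The geometric excess $|x|+|y-x|-L\ge L^{2\xi-1}$ is turned into an energy cost $\ge cL^{2\xi-1}$ by Lemma~\ref{witchi}, and the proof closes because $2\xi-1>\chi(\xi)$ whenever $\xi>\tilde\xi$. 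All three pieces of $\tilde\xi$ (including $\tfrac34$, which comes from the $\tfrac12$ inside $\chi$) fall out of this \emph{single} inequality, not from three separate mechanisms as in your sketch.

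Your plan instead asks for a quenched, per-slab lower bound on the cost of a transversal bulge for undirected Brownian motion in a long-range potential, and then multiplies over slabs. You do not indicate how to prove that estimate; the heuristics you give for why $\tfrac{1}{1+\gamma}$ and $\tfrac{2+d}{2\alpha}$ should emerge (``balancing entropic price against shed potential'') are not tied to any concrete bound in your outline, and the undirectedness you correctly worry about prevents the slab pieces from combining multiplicatively in any straightforward way. In effect your ``main obstacle'' is a local restatement of the theorem rather than a reduction. The W\"uthrich-type curvature-versus-fluctuation argument you invoke only for the $\tfrac34$ threshold \emph{is} the paper's argument for the whole range---what changes across regimes is not the mechanism but the fluctuation exponent $\chi(\xi)$, and obtaining $\chi$ (via the potential modification and the multiscale martingale) is where the real work lies.
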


\begin{rem}\rm For the point-to-point model, one does not have an equivalent of Corollary \ref{tyui}, the reason being that
the lower-bound that we have on $\xi_0$ in \cite{cf:H1} was slightly suboptimal. However we strongly believe that the analogous results hold. 
 \end{rem}

%
%

The ideas of this proof are inspired by \cite{cf:W3} and \cite{cf:SAOP} where an upper bound on the volume exponent is proved for 
model with traps of bounded range ( $\xi_1\le 3/4$). In \cite{cf:SAOP}, Sznitman uses martingale techniques to prove concentration of $Z^\go(x,y)$ around its mean, and in 
\cite{cf:W3}
Wüthrich uses these concentration results to prove the bound on the volume exponent.

\medskip

These techniques cannot directly apply to our model, and in fact both bounds proved in \cite{cf:SAOP} and \cite{cf:W3} 
do not hold when there are too strong correlations in the environment. This is not surprising as in \cite{cf:H1}
 it was shown that the upper-bound $\xi_1 \le 3/4$ proved in \cite{cf:W3} does not always hold.

Our strategy is to study the model with a slightly modified potential:
\begin{itemize}
\item First in Section \ref{modpot} we present our modification of the potential and show that 
it does not modify much that probabilities of $\mathcal A_y^{\xi}$, $\mathcal A_L^{\xi}$ (Proposition \ref{micmac}.
\item In Section \ref{concent}, we show that the partition function associated to the modified potential concentrates around its mean, using a multiscale analysis (Proposition
\ref{concentrat}).
\item In Section \ref{super}, we use Proposition \ref{concentrat} to prove Theorem \ref{turiaf1} and \ref{turiaf}.
\end{itemize}

\begin{rem}\rm Some of the refinement of the techniques (in particular, the multiscale analysis) used here are not needed if one simply wants to prove that 
that $\bbE\left[\mu_{0,y}^{\go}(\mathcal A_y^{\xi})\right]$ tends to zero for some $\xi<1$. The reason we use them is that they allow us to get a slightly
 better bound,
and that they are absolutely necessary to get Corollary \ref{tyui}.

\end{rem}

\section{Modification of the potential $V$}\label{modpot}

We slightly modify   $V$ in order to have a potential with nicer properties.
In particular we want to
\begin{itemize}
\item Make it bounded (by a constant depending on $L$)
\item Suppress traps whose radius is too large to have only finite range correlation.
(what ``too large'' depends also on $L$) in order to treat potential for far away region independently.
\end{itemize}
%
%
%
%

In this section we define this modified  potential and show that with our choice for modifications of the potential does not significantly change the probability of $\mathcal A_L^{\xi}$
(or if it does, that it does it in the right direction).
Given $\xi>\tilde \xi(\alpha, t, d)$ we define 
\begin{equation}
\bar\xi:=\min(\xi,d/\alpha).
\end{equation}
The modified potential $\bar V^{\go}_L$ by

\begin{equation}
\bar V^{\go}_L(x):=\sum_{n=0}^{\bar \xi \log_2 L} 
\min\left(\sum_{i=1}^\infty \ind_{\{r_i\in[2^n, 2^{n+1})\}} 
r_i^{-\gga} \ind_{\{| x-\go_i | \le r_i\}},2^{-n\gamma}\log L\right).
\end{equation}
(it is the same as $V$ except that it ignores traps whose radius is larger than $2L^{\bar\xi}$, and that it cuts the contribution of 
traps of  diameter $[2^n, 2^{n+1})$  at the level $2^{-n\gamma}\log L$).
In analogy with \eqref{defZ}, \eqref{defmu}, \eqref{defZ1} \eqref{defmu1} one defines $\bar Z^\go_L$, 
$\bar\mu_L^\go$, $\bar Z^\go(x,y)$, $\bar\mu_L^\go(x,y)$,
by replacing $V^\go$ by $\bar V_L^{\go}$.

\medskip

This is not a very drastic modification and it should not change the probability of  $\mathcal A_L^\xi$ (and that of  $\mathcal A_y^\xi$ for $|y|=L$) 
and for two reasons:
\begin{itemize}
\item With $\bbP$-probability going to one, there is no trap of radius more than $2L^{\bar \xi}$ that intersects $\mathcal C_L^\xi$.

\item With $\bbP$-probability going to one, $$\max\left(\sum_{i=1}^\infty \ind_{\{r_i\in[2^n, 2^{n+1})\}} 
r_i^{-\gga} \ind_{\{| x-\go_i | \le r_i\}},2^{-n\gamma}\log L\right)$$ is equal to $\sum_{i=1}^\infty \ind_{\{r_i\in[2^n, 2^{n+1})\}} 
r_i^{-\gga} \ind_{\{| x-\go_i | \le r_i\}}$ for all $x$ in $B(0,L^2)$ the Euclidean ball of radius $L^2$ centered at zero.
\end{itemize}
And indeed one has

\begin{proposition}\label{micmac}
 There exists  $c$
such that, for all $\xi\ge \tilde \xi$, for any $y$ such that $|y|=L$,  with probability going to one when $L$ tends to infinity, 
\begin{equation}\begin{split}\label{bigmac}
\mu^\go_{0,y}(\mathcal A^\xi_y)\ge \bar\mu^\go_{0,y}(\mathcal A^\xi_y)-e^{-cL^2}, \\
\mu_L^\go(\mathcal A^\xi_L)\ge \bar\mu_L^\go(\mathcal A^\xi_L)-e^{-cL^2}.
\end{split}\end{equation}
\end{proposition}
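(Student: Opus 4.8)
The plan is to prove the two inequalities in \eqref{bigmac} by controlling separately the numerator and the denominator of the relevant conditioned measures, using the two "with probability going to one" facts stated just before the Proposition. I will write the argument for $\mu_L^\go$; the point-to-point case $\mu_{0,y}^\go$ is identical with $\mathcal C_L^\xi$ replaced by $\mathcal C_y^\xi$ and $T_{\mathcal H_L}$ by $T_{B(y)}$.

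\medskip

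First I would set up the two good events. Let $G_1$ be the event that no trap $(\go_i,r_i)$ with $r_i\ge 2L^{\bar\xi}$ satisfies $B(\go_i,r_i)\cap\mathcal C_L^\xi\neq\emptyset$, and let $G_2$ be the event that for every $x\in B(0,L^2)$ and every $n$ the truncation at level $2^{-n\gga}\log L$ is inactive, i.e.\ $\sum_i \ind_{\{r_i\in[2^n,2^{n+1})\}} r_i^{-\gga}\ind_{\{|x-\go_i|\le r_i\}}\le 2^{-n\gga}\log L$. I would first verify that $\bbP(G_1\cap G_2)\to 1$: for $G_1$ this is a first-moment (Mecke/Campbell) computation over the Poisson process — the expected number of offending traps is of order $L\cdot L^{(d-1)\xi}\cdot \nu([2L^{\bar\xi},\infty))\asymp L^{1+(d-1)\xi-\alpha\bar\xi}$, and since $\bar\xi=\min(\xi,d/\alpha)$ one checks $\alpha\bar\xi>1+(d-1)\xi$ using $\xi>\tilde\xi\ge (2+d)/(2\alpha)$ (when that branch is active) or $\bar\xi=d/\alpha$ directly; for $G_2$ one unions over a fine ($e^{-cL^2}$-dense) net of $B(0,L^2)$ and over the $O(\log L)$ scales, bounding each probability that a sum of the stated form exceeds $2^{-n\gga}\log L$ by a Poisson large-deviation (Chernoff) estimate, the intensity of traps of radius $\asymp 2^n$ covering a given point being $O(2^{nd})$ — this is exactly the kind of estimate already used in \cite{cf:H1}. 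On $G_1\cap G_2$ we have, for all $x\in B(0,L^2)$,
\begin{equation}
\bar V_L^\go(x)=\sum_{n=0}^{\bar\xi\log_2 L}\ \sum_{i=1}^\infty \ind_{\{r_i\in[2^n,2^{n+1})\}} r_i^{-\gga}\ind_{\{|x-\go_i|\le r_i\}}\le V^\go(x),
\end{equation}
since $\bar V_L^\go$ is obtained from $V^\go$ by dropping the (nonnegative) contributions of traps with $r_i\ge 2L^{\bar\xi}$.

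\medskip

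The heart of the argument is then the following deterministic inequality, valid on $G_1\cap G_2$. On $\mathcal A_L^\xi$ the trajectory stays inside $\mathcal C_L^\xi\subset B(0,L^2)$ (for $L$ large, since $\xi<1<2$ and $L^\xi+L<L^2$), so there $\bar V_L^\go(B_t)\le V^\go(B_t)$ pointwise and hence $e^{-\int_0^{T_{\mathcal H_L}}(V^\go(B_t)+\gl)\dd t}\le e^{-\int_0^{T_{\mathcal H_L}}(\bar V_L^\go(B_t)+\gl)\dd t}$ on $\mathcal A_L^\xi$; integrating against $\bP$ gives the numerator bound
\begin{equation}
\bE\!\left[e^{-\int_0^{T_{\mathcal H_L}}(V^\go+\gl)\dd t}\ind_{\mathcal A_L^\xi}\right]\ \le\ \bE\!\left[e^{-\int_0^{T_{\mathcal H_L}}(\bar V_L^\go+\gl)\dd t}\ind_{\mathcal A_L^\xi}\right].
\end{equation}
Wait — this inequality goes the wrong way for what we want, so instead I reverse the roles: the point is that $\mu_L^\go(\mathcal A_L^\xi)=\bE[e^{-\int(V^\go+\gl)}\ind_{\mathcal A_L^\xi}]/Z_L^\go$ and I want a lower bound, so I need an \emph{upper} bound on $Z_L^\go$ and a \emph{lower} bound on the numerator in terms of the barred quantities. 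For the denominator, split $Z_L^\go=\bE[\cdots\ind_{\mathcal A_L^\xi}]+\bE[\cdots\ind_{(\mathcal A_L^\xi)^c}]$; on $\mathcal A_L^\xi$ we use that on $G_1$ the ignored traps do not meet $\mathcal C_L^\xi$ at all, so $V^\go(B_t)=\sum_{n\le\bar\xi\log_2 L}\sum_i(\cdots)\ge\bar V_L^\go(B_t)$ there, hence $\bE[e^{-\int(V^\go+\gl)}\ind_{\mathcal A_L^\xi}]\le\bE[e^{-\int(\bar V_L^\go+\gl)}\ind_{\mathcal A_L^\xi}]=\bar Z_L^\go\,\bar\mu_L^\go(\mathcal A_L^\xi)$; and the complementary piece $\bE[e^{-\int(V^\go+\gl)}\ind_{(\mathcal A_L^\xi)^c}]\le\bE[e^{-\int_0^{T_{\mathcal H_L}}\gl\dd t}\ind_{(\mathcal A_L^\xi)^c}]$ is bounded by a Brownian estimate of order $e^{-cL^{2\xi-1}}$ for the probability that Brownian motion exits the tube of width $L^\xi$ before time $\asymp L$ reaching $\mathcal H_L$ — crude, but it will be comfortably smaller than $\bar Z_L^\go e^{-cL^2}$? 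No: $\bar Z_L^\go$ itself is only exponentially small in $L$ (roughly $e^{-c L}$ from the mass $\gl$ alone), so $e^{-cL^{2\xi-1}}$ is not automatically negligible against it when $\xi<1$.

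\medskip

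The correct and cleaner route — which is what I would actually carry out — avoids comparing $Z_L^\go$ with $\bar Z_L^\go$ and instead compares $\mu_L^\go$ with $\bar\mu_L^\go$ directly through a change-of-measure (Radon–Nikodym) identity. On $G_1\cap G_2$, for any trajectory staying in $B(0,L^2)$ one has $\bar V_L^\go(B_t)\le V^\go(B_t)$, and moreover $V^\go(B_t)-\bar V_L^\go(B_t)$ is supported on the event that $B_t$ lies in some trap of radius $\ge 2L^{\bar\xi}$, which on $G_1$ forces $B_t\notin\mathcal C_L^\xi$. Therefore $\frac{\dd\mu_L^\go}{\dd\bar\mu_L^\go}(B)=\frac{\bar Z_L^\go}{Z_L^\go}e^{-\int_0^{T_{\mathcal H_L}}(V^\go-\bar V_L^\go)(B_t)\dd t}$, and restricted to the event $\mathcal A_L^\xi$ the exponent vanishes, giving $\mu_L^\go(\mathcal A_L^\xi)=\frac{\bar Z_L^\go}{Z_L^\go}\,\bar\mu_L^\go(\mathcal A_L^\xi)$. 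It thus suffices to show $\bar Z_L^\go/Z_L^\go\ge 1-e^{-cL^2}$, equivalently $Z_L^\go-\bar Z_L^\go\le e^{-cL^2}\bar Z_L^\go$. Now $Z_L^\go-\bar Z_L^\go=\bE[(e^{-\int(V^\go+\gl)}-e^{-\int(\bar V_L^\go+\gl)})]$; the integrand is nonzero only on trajectories that either enter a big trap or reach a point of $B(0,L^2)^c$ or trigger a truncation, but these are a subset of the trajectories leaving $\mathcal C_L^\xi$ together with the $\bbP$-null-on-$G_1\cap G_2$ configurations. Splitting $\bar Z_L^\go\ge\bar Z_L^\go\bar\mu_L^\go(\mathcal A_L^\xi)$ and observing $\bar Z_L^\go\bar\mu_L^\go(\mathcal A_L^\xi)=\bE[e^{-\int(\bar V_L^\go+\gl)}\ind_{\mathcal A_L^\xi}]\ge e^{-c_0 L}\bP(\mathcal A_L^\xi\cap\{T_{\mathcal H_L}\le c_1L\})\ge e^{-c_2 L}$ (Brownian lower bound, using that $\bar V_L^\go\le C\log^2 L$ is bounded on $G_2$), while $Z_L^\go-\bar Z_L^\go\le\bE[e^{-\int\gl\dd t}\ind_{(\mathcal A_L^\xi)^c}]\le\bE[e^{-\gl T_{\mathcal H_L}}\ind_{T_{\mathcal H_L}\ge L^{2\xi-1}/C}]+\bP(\textrm{exit tube before }c L)\le e^{-c'L^{2\xi-1}}$ — hmm, still the same gap. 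So the genuinely necessary input is a \emph{matching} decay: I must bound $Z_L^\go-\bar Z_L^\go$ not just by $\bP((\mathcal A_L^\xi)^c)$ but by the probability of reaching a big trap or $B(0,L^2)^c$, which on $G_1$ requires travelling Euclidean distance $\gtrsim L^{\bar\xi}\wedge L^2$ away from the axis within time $T_{\mathcal H_L}$, and coupling this with the time constraint gives Gaussian decay $e^{-cL^2}$ in the relevant regime. I expect this last estimate — pinning down that the only way the two potentials can disagree along a path is to make an excursion so large that it costs $e^{-cL^2}$ in Brownian cost net of the fixed killing rate $\gl$, and that this beats the a priori $e^{-c_2L}$ lower bound on $\bar Z_L^\go$ — to be the main obstacle and the place where one must use $\bar\xi=\min(\xi,d/\alpha)$ and $L^2\gg L$ carefully; everything else (the Poisson computations for $G_1,G_2$, the Radon–Nikodym identity, the elementary Brownian bounds) is routine.
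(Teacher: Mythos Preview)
Your Radon--Nikodym route is correct and in fact more direct than the paper's, but you have overlooked the one-line conclusion at the end. Once you have, on $G_1\cap G_2$, the identity
\[
\mu_L^\go(\mathcal A_L^\xi)\;=\;\frac{\bar Z_L^\go}{Z_L^\go}\,\bar\mu_L^\go(\mathcal A_L^\xi),
\]
you are finished: since $\bar V_L^\go\le V^\go$ pointwise and \emph{deterministically} (you yourself note this earlier), one has $\bar Z_L^\go\ge Z_L^\go$ always, hence $\bar Z_L^\go/Z_L^\go\ge 1$ and therefore $\mu_L^\go(\mathcal A_L^\xi)\ge\bar\mu_L^\go(\mathcal A_L^\xi)$ on $G_1\cap G_2$, with no $e^{-cL^2}$ correction even needed. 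Your final paragraph, trying to bound $Z_L^\go-\bar Z_L^\go$ by Brownian excursion costs, is chasing a phantom: that difference is nonpositive. The ``main obstacle'' you anticipate does not exist.

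For comparison, the paper takes a slightly longer route. It introduces an intermediate potential $\tilde V^\go$ (only the large traps removed, no level truncation), shows that $\bar V^\go=\tilde V^\go$ on $B(0,L^2)$ with high probability, and then argues that the measures $\bar\mu$ and $\tilde\mu$ agree once conditioned on the event $\mathcal S_L=\{\forall t\le T_{B(y)},\ |B_t|\le L^2\}$, whose complement has $\bar\mu$-mass at most $e^{-cL^2}$; this excursion-to-distance-$L^2$ estimate is where the $e^{-cL^2}$ in the statement originates. Finally the paper uses $\tilde V\le V$ everywhere with equality on the tube (your event $G_1$) to obtain $\mu(\mathcal A)\ge\tilde\mu(\mathcal A)$. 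Your direct comparison of $\mu$ with $\bar\mu$ short-circuits the intermediate $\tilde\mu$ step and in fact yields the stronger inequality $\mu(\mathcal A)\ge\bar\mu(\mathcal A)$ on the good event; the paper's detour through $\tilde V$ and $\mathcal S_L$ is not wrong but is unnecessary.

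One minor correction: in your verification that $\bbP(G_1)\to 1$ you invoke the wrong branch of $\tilde\xi$. When $\bar\xi=\xi$ the inequality $1+(d-1)\xi-\alpha\xi<0$ is exactly $\xi>1/(1+\alpha-d)$, which is the branch $\tilde\xi\ge 1/(1+\alpha-d)$; when $\bar\xi=d/\alpha$ one needs $1+(d-1)\xi-d<0$, i.e.\ $\xi<1$. The branch $(2+d)/(2\alpha)$ plays no role here.
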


\begin{proof}
We only prove the first line in \eqref{bigmac} which is the result concerning the point-to-point model. The other one is proved analogously.
Set 

\begin{equation}
\tilde V^{\go}_L(x):=
\sum_{i=1}^\infty \ind_{\{r_i\le 2 L^{\bar\xi} \}} 
r_i^{-\gga} \ind_{\{| x-\go_i | \le r_i\}}.
\end{equation}
(the only difference with $V$ is that traps with radius larger than $2L^{\bar \xi}$ are not taken into account)
and define $\tilde \mu^\go_{0,y}$ and $\tilde Z^{\go}(0,y)$ as in \eqref{defZ1} and \eqref{defmu1}.

\medskip

Our first job is to show that $\bar \mu^\go_{0,y}$ , and $\tilde \mu^\go_{0,y}$ are close in total variation, 
then we compare $\tilde \mu^\go_{0,y}(\mathcal A_y^\xi)$ with $\bar\mu^\go_{0,y}(\mathcal A_y^\xi)$.
We notice that  $\bar V^\go$ and $\tilde V^\go$ coincide with probability tending
to one on $B(0,L^2)$, indeed a consequence of Lemma \ref{techin} (proved in the appendix)
is that 

\begin{equation}
\bbP\left[\exists x \in B(0,L^2), \bar V^\go(x)\ne \tilde V^\go(x) \right]\le \frac{1}{L}.
\end{equation}

When the event $\{\forall  x \in B(0,L^2), \bar V^\go(x)= \tilde V^\go(x) \}$ holds
then 
 $\bar \mu^\go_{0,y}(\cdot \ |\mathcal S_L)$ and $\tilde \mu_{0,y}(\cdot \ | \mathcal S_L)$, the measures conditioned on the event
\begin{equation}
\mathcal S_L= \{ \ \forall t \in[0,T_{B(y)}] \  , \ |B_t|\le L^2 \ \},
\end{equation}
are equal, and therefore it remains only to show that with large probability $\bar \mu_{0,y}((\mathcal S_L)^c)$ 
and $\tilde \mu_{0,y}((\mathcal S_L)^c)$ are small. Set $\tau_{L^2}:=\inf\{t, |B_t|\ge L^2\}$, then

\begin{equation}\label{retac}
\bar \mu_{0,y}((\mathcal S_L)^c)\le \frac{\bE\left[e^{-\gl T_{B(y)}}\ind_{\{\tau_{L^2}\le T_{B(y)}<\infty \}}\right]}{\bar Z(0,y)}\le 
\frac{\bE\left[e^{-\gl \tau_{L^2}}\right]}{\bar Z(0,y)}.
\end{equation}
As $\bar V(x)\le \log L$ for all $x$, thanks to standard tubular estimate for Brownian motion  (see e.g. (1.11) of \cite{cf:SCP}) for $C$ large enough
\begin{equation}
\log \bar Z^\go(0,y)\ge -C L\log L.
\end{equation}
Other standard estimates give that there exists $c$ such that
\begin{equation}
\bE\left[e^{-\gl \tau_{L^2}}\right]\le e^{-cL^2}
\end{equation}
so that for $L$ large 
\begin{equation}\label{retic}
\bar \mu^\go_{0,y}((\mathcal S_L)^c)\le e^{-cL^2}
\end{equation} 
tends to zero when $L$ tends to infinity.
Working on the event ``$\bar V$ and $\tilde V$ coincide on $B(0,L^2)$'' holds we get the same conclusion for $\tilde \mu_{0,y}^{\go}$
so that with probability going to one

\begin{equation}\label{TV}
\|\tilde \mu_{0,y}^{\go}-\bar \mu^\go_{0,y}\|_{TV}\le e^{-cL^2}.
\end{equation}
Now we remark that with probability going to one
$\tilde V$ and $V$ coincide on $\mathcal C_y^\xi$ i.e.\ that 

\begin{multline}
\lim_{|y|=L\to \infty}\bbP\left[\exists x \in \mathcal C_y^\xi, \bar V^\go(x)\ne  V^\go(x) \right]\\
= \lim_{|y|=L\to \infty}\bbP\left[\exists i, r_i\ge 2L^{\bar \xi}, B(\go,r_i)\cap \mathcal C_y^\xi\ne \emptyset  \right]=0.
\end{multline}
Indeed the number of traps of radius larger that $2L^{\bar \xi}$ that intersects $\mathcal C_y^\xi$ is a Poisson variable and its mean is 

\begin{equation}\label{zer}
\int_{2L^{\bar \xi}}^{\infty}  \left(\sigma_d (r+L^{\xi})^d+L\sigma_{d-1} (r+L^\xi)^{d-1}\right)\alpha r^{-\alpha-1}\dd r \le C L^{1+\xi(d-1)-\alpha\bar \xi}. 
\end{equation}
We let the reader check that with our choice of $ \xi$ and $\bar \xi$, 
\begin{equation}
1+\xi(d-1)-\alpha\bar \xi < 0
\end{equation}
so that the r.h.s of \eqref{zer} tends to zero.
For any $x$ in $(\mathcal C_y^\xi)^c$ we necessarily have $\tilde V^\go(x)\le V^\go(x)$ from the definitions, so that on the event ``$\tilde V$ and $V$ coincide on $\mathcal C_y^\xi$",

\begin{equation}
 \mu_{0,y}^{\go}(\mathcal A_y^\xi)\ge \tilde \mu_{0,y}^{\go}(\mathcal A_y^\xi).
\end{equation}
A combination of the above and \eqref{TV} allows us to conclude.

\end{proof}

\section{Concentration inequalities}\label{concent}

In this section, one derives some concentration inequalities similar to the one obtained in \cite{cf:SAOP}
for the log partition function with the modified potential $\log \bar Z^\go(u,v)$. It could be shown that for some choice of parameters,
these concentration results do not hold for the original potential.
We suppose that $L$ is fixed, and set

\begin{equation}
\bar Z^\go (u,v):= \bE_x \left[e^{\int_0^{T_y} (\gl+\bar V^\go(B_t)) \dd t }\right].
\end{equation}

\begin{equation}
\chi=\chi(\xi):=\max\left(\frac 1 2, (1-\gamma)\bar \xi, \frac{1}{2}(1+\bar \xi(1+d-2\gamma-\alpha))\right).
\end{equation}

\begin{proposition}\label{concentrat}
Suppose that $\xi\ge \tilde \xi(\alpha,\gamma,d)$
For any $\gep>0$ one can find $\gd$ such that
for any $(u,v)\in \bbR^d$, $|u-v|\le 2L$
\begin{equation}
\bbP\left( |\log \bar Z^\go (u,v)-\bbE \log \bar Z^\go (u,v)| \ge L^{\chi+\gep}\right)\le \exp(-L^{\delta}).
\end{equation}
\end{proposition}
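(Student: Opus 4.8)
The plan is to establish the concentration inequality via a martingale (Azuma–Hoeffding type) argument along a multiscale decomposition of the Poisson point process, in the spirit of Sznitman's approach in \cite{cf:SAOP}, but adapted to the unbounded-radius setting by exploiting the two modifications built into $\bar V^\go_L$: the potential is bounded by $\log L$, and only traps of radius at most $2L^{\bar\xi}$ survive. First I would fix $(u,v)$ with $|u-v|\le 2L$ and note that, because the relevant Brownian trajectories (those not paying an exponentially large cost) stay in a bounded region — say $B(u, C L^2)$ — only the restriction of $\go$ to $B(u,CL^2)\times[0,2L^{\bar\xi}]$ matters, up to an error $e^{-cL^2}$ handled as in the proof of Proposition \ref{micmac}. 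I would then tile this region by boxes at dyadic scales $2^n$, $0\le n\le \bar\xi\log_2 L$: at scale $n$ the box side length is comparable to $2^n$ (the diameter of the traps being cut at that scale), and I enumerate the (finitely many) boxes in some fixed order. This gives a finite filtration $\cF_0\subset\cF_1\subset\cdots\subset\cF_N$ where each step reveals the points of $\go$ (position and radius in $[2^n,2^{n+1})$) in one box at one scale, and $\log\bar Z^\go(u,v)$ is $\cF_N$-measurable.

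The core estimate is then a bound on the martingale increment $D_k := \bbE[\log\bar Z^\go(u,v)\mid \cF_k] - \bbE[\log\bar Z^\go(u,v)\mid \cF_{k-1}]$ when step $k$ corresponds to a scale-$n$ box. Resampling the Poisson configuration inside a single scale-$n$ box changes the potential only inside that box, where the capped contribution is at most $2^{-n\gamma}\log L$; a trajectory crossing the tube spends time of order (box diameter) $\sim 2^n$ inside any fixed such box, and the number of scale-$n$ boxes any crossing trajectory can meet is bounded — this is where one uses that the trajectory travels total length $\lesssim L$ transversally confined, so it meets $O(L/2^n)$ boxes along the axis but the conditional expectation structure lets one bound things box by box. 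The upshot, via the standard convexity/Jensen argument for $\log$ of an expectation (comparing $\bar Z$ with the resampled $\bar Z'$ and using that the Radon–Nikodym derivative is controlled by $e^{\pm (\text{local potential change})\times(\text{local time})}$), is $|D_k|\le c\, 2^{n(1-\gamma)}\log L$ for a box at scale $n$, possibly with an extra additive term from the geometry. Summing $\sum_k D_k^2$ over all boxes: at scale $n$ there are $\sim L^2/2^{nd}\cdot(\text{number along axis})$ boxes actually reachable — more carefully, the trajectory meets $O(L/2^n)$ boxes at scale $n$ (those intersecting the tube $\cC^\xi$, whose cross-section has radius $L^\xi$, giving $O((L^\xi/2^n)^{d-1}\cdot L/2^n)$ boxes but only $O(L/2^n)$ along the relevant direction contribute to the variance bound), so that $\sum_{\text{scale }n}D_k^2 \lesssim \frac{L}{2^n}\cdot(2^{n(1-\gamma)}\log L)^2 = L\, 2^{n(1-2\gamma)}(\log L)^2$, and summing the geometric-type series in $n$ up to $\bar\xi\log_2 L$ produces a total of order $L\cdot L^{\bar\xi(1-2\gamma)_+}(\log L)^2$ or, combined with the trap-count at the top scale giving the term $L^{1+\bar\xi(1+d-2\gamma-\alpha)}$, exactly $L^{2\chi}$ up to logarithmic factors, by the definition of $\chi$. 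Azuma–Hoeffding then gives
\begin{equation}
\bbP\left(|\log\bar Z^\go(u,v)-\bbE\log\bar Z^\go(u,v)|\ge t\right)\le 2\exp\left(-\frac{t^2}{2\sum_k \|D_k\|_\infty^2}\right)\le 2\exp\left(-\frac{t^2}{cL^{2\chi}(\log L)^2}\right),
\end{equation}
and taking $t=L^{\chi+\gep}$ makes the right-hand side $\le\exp(-L^\delta)$ for any $\delta<2\gep$ and $L$ large.

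The main obstacle I expect is controlling the martingale increments uniformly, i.e.\ proving $|D_k|\le c\,2^{n(1-\gamma)}\log L$ rigorously rather than heuristically. The subtlety is that $\log\bar Z^\go$ depends on the configuration in a box in a way that is not Lipschitz pointwise — adding one enormous cluster of traps in a scale-$n$ box could in principle change $\log\bar Z$ by more than the naive local estimate if the box is a bottleneck the trajectory is essentially forced through. Here one must use the cap $2^{-n\gamma}\log L$ genuinely: the change in the \emph{integrated} potential along any admissible trajectory passing through the box is at most (cap)$\times$(time in box)$\le 2^{-n\gamma}\log L\cdot c\,2^{2n}$? — no, the sojourn time of Brownian motion in a ball of radius $2^n$ starting inside it is $O(2^{2n})$ in expectation but the trajectory only needs to be estimated crossing the box, and one should instead bound via the number of \emph{distinct} scale-$n$ boxes visited times the per-visit cost, or better, re-derive the increment bound by a coupling that changes the configuration in the box and reroutes the trajectory around it at entropic cost $O(2^n)$ — the standard "avoid a ball" surgery. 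Making this surgery argument quantitative and checking that the resulting bound, summed as above, really reproduces the three terms inside the $\max$ defining $\chi$ (the $1/2$ from the trivial bound on the number of boxes, the $(1-\gamma)\bar\xi$ from the largest-scale traps, and the mixed term $\frac12(1+\bar\xi(1+d-2\gamma-\alpha))$ from the trap-density/cap trade-off) is the delicate computational heart of the proof, and is exactly the place where the multiscale refinement (as opposed to a single-scale cut) is needed to get the sharp exponent required for Corollary \ref{tyui}.
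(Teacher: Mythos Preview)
Your multiscale martingale strategy is the right one and matches the paper's approach closely: the paper also filters first by trap-radius scale $n$ (setting $M_n=\bbE[\log\bar Z^\go(0,v)\mid\cF_n]$) and then, within each scale, by spatial boxes of side $2^n$. The increment bound $|\Delta M_{n,k}|\le C(\log L)^2\,2^{n(1-\gamma)}$ you aim for is essentially Lemma~\ref{dacodac}.

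The genuine gap is the use of Azuma--Hoeffding with $\sum_k\|D_k\|_\infty^2$ in the denominator. That sum is far too large: at scale $n$ there are of order $(L^2/2^n)^d$ boxes in $B(u,CL^2)$, and for \emph{every} one of them the $L^\infty$ bound on $D_k$ is the same $\sim 2^{n(1-\gamma)}\log L$ --- the hitting probability $\bar\mu^\go_{0,v}(T_k\le T_{B(v)})$ that you want to exploit is a random variable depending on the whole environment, not an a~priori deterministic bound, so it cannot improve $\|D_k\|_\infty$. Hence $\sum_k\|D_k\|_\infty^2\gtrsim L^{2d}$, which is useless. Your sentence ``the trajectory meets $O(L/2^n)$ boxes at scale $n$'' is exactly the right intuition, but it cannot be plugged into Azuma, which needs a deterministic bound on the sum of squared increments rather than one holding along the (random) trajectory.

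The paper resolves this by replacing Azuma--Hoeffding with Kesten's martingale concentration theorem (Proposition~\ref{Keste}). Kesten's result only asks that the conditional second moment $\bbE[(\Delta M_{n,k})^2\mid\cF_{n,k-1}]$ be dominated by some $\bbE[U_{n,k}\mid\cF_{n,k-1}]$ and that $\sum_k U_{n,k}$ have exponential tails. The paper takes $U_{n,k}$ proportional to $\bar\mu^\go_{0,v}(T_k\le T_{B(v)})\cdot 2^{n[2(1-\gamma)+d-\alpha]}$; the factor $2^{n(d-\alpha)}$, absent from your displayed computation, comes from $\bbE[\mathcal N_{n,k,\pm}^2]\sim 2^{n(d-\alpha)}$, the Poisson mean of the trap count in a single box. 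Then $\sum_k U_{n,k}$ is a constant times the number of distinct $\tilde C_{n,k}$ visited by the path under $\bar\mu^\go_{0,v}$, whose tails are controlled in Lemma~\ref{tuture}. This is precisely how ``only $O(L/2^n)$ boxes matter'' is made rigorous, and it is what produces the three terms in the definition of~$\chi$.
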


As the environment is translation invariant, we need only  to prove the result the case $|v|\le 2L$, $u=0$.

The proof of this proposition requires a multi-scale analysis, to treat traps of different scale in separate steps.
One could get a result by doing a rougher analysis, but this would never get us something optimal.
On the contrary, the multi-scale analysis allows us to get sharper results that are optimal for some special choice of the parameters
(i.e.\ they allow to get an upper bound on the volume exponent that matches the lower bound).

\medskip

For all $n$ define $\mathcal F_n$ to be the sigma-algebra generated by the traps of radius smaller than $2^n$

\begin{equation}
\mathcal F_n:=\sigma\left( \go(A), A \in \mathcal B(\bbR^d\times \bbR_+),  A\subset \bbR^d\times [1,2^{n}]\right),
\end{equation}
($\go(A)$ above stands for the number of point in $A$
and $\mathcal B(\bbR^d\times \bbR_+)$ stands for the the sigma fields of Borel-sets).
We define for $n\ge 0$,

\begin{equation}
M_n:=\bbE\left[\log \bar Z^{\go} (0,v) | \mathcal F_n\right],
\end{equation}
(Note that $M_{\bar\xi\log_2 L}=\log \bar Z^{\go} (0,v)$).
The sequence $(M_n)_{n\ge 0}$ is a martingale for the filtration $(\mathcal F_n)_{n\ge 0}$.
We prove Proposition \ref{concentrat} by proving concentration for every increment of
$(M_n)_{n\ge 0}$ (there are only $O(\log L)$ increments so that this is sufficient to
get the result).

\begin{lemma}\label{trebo}
For any $\gep$ there exists $\delta$ such that
for all $n\in [1, \bar\xi\log_2 L]$,
\begin{equation}
\bP\left[| M_{n}- M_{n-1}|\ge 
L^{\chi+\gep}\right]\le e^{-L^{\delta}}.
\end{equation}
\end{lemma}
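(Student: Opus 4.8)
The plan is to estimate the martingale increment $M_n - M_{n-1}$ by a standard coupling/resampling argument: since $\mathcal F_n$ differs from $\mathcal F_{n-1}$ only through the traps with radius in $[2^{n-1},2^n)$, I write $M_n - M_{n-1}$ as an average (over the $\mathcal F_{n-1}$-conditional law) of the difference between $\log\bar Z^\go(0,v)$ and $\log\bar Z^{\go'}(0,v)$, where $\go'$ has the scale-$n$ traps resampled independently. So the first step is to quantify, deterministically in the rest of the environment, how much adding or removing a single family of traps of radius $\sim 2^n$ inside the relevant region can move $\log \bar Z^\go(0,v)$. Because the trajectory contributing to $\bar Z^\go(0,v)$ may be taken to essentially stay in $\mathcal C_v^\xi$ (a tube of length $\le 2L$ and width $L^\xi$) and the cut-off makes each scale-$n$ contribution at most $2^{-n\gamma}\log L$, the total mass of scale-$n$ potential that a length-$T$ Brownian path can accumulate is controlled; combined with the tubular lower bound $\log\bar Z^\go(0,v)\ge -CL\log L$ already used in Section \ref{modpot}, this gives a crude a priori bound on $|M_n-M_{n-1}|$.

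The second and main step is to get the \emph{sharp} bound $L^{\chi+\gep}$ rather than the crude one. Here I would follow the martingale/concentration strategy of \cite{cf:SAOP}: control the number $N_n$ of scale-$n$ traps that meet the tube $\mathcal C_v^\xi$ — a Poisson variable with mean of order $L^{1+\xi(d-1)-\alpha(n/\log_2 L)\cdot(\cdots)}$, more precisely of order $(L + L^{\xi(d-1)})2^{n(d-\alpha)}$ up to constants — and show it concentrates. On the event that $N_n$ is not much larger than its mean, each such trap changes $\log\bar Z^\go$ by at most (roughly) $2^{-n\gamma}$ times the expected time the conditioned path spends in a ball of radius $2^n$, which is of order $2^{2n}\wedge(\text{length})$, and the effects of disjoint traps are controlled by a BV/Lipschitz estimate on $\log\bar Z$ as a function of the potential (using $0\le \bar V\le \log L$ and the explicit form of the cut-off). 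Optimizing the resulting bound $\sum$ over the scale $2^n \le 2L^{\bar\xi}$, i.e.\ balancing the number of traps against the per-trap cost and against the tube geometry, is exactly what produces the three terms in the definition of $\chi(\xi)=\max(\tfrac12,(1-\gamma)\bar\xi,\tfrac12(1+\bar\xi(1+d-2\gamma-\alpha)))$: the $\tfrac12$ from the diffusive/Gaussian fluctuation of the path itself, the $(1-\gamma)\bar\xi$ from the largest retained traps (scale $2^n\sim L^{\bar\xi}$, whose individual contribution $2^{-n\gamma}\cdot 2^{n}\sim L^{(1-\gamma)\bar\xi}$ dominates), and the middle term from the variance of the Poissonian count of intermediate-scale traps.

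The third step is to upgrade the moment/mean-difference control into the stretched-exponential tail $e^{-L^\delta}$. For the Poisson count $N_n$ this is immediate (Poisson concentration). For the fluctuation of $M_n-M_{n-1}$ conditionally on $N_n$ being typical, I would apply a bounded-difference (Azuma–Hoeffding) inequality in the scale-$n$ traps: resampling one trap changes $\log\bar Z$ by the per-trap bound above, and there are $\approx N_n$ of them, so Azuma gives a Gaussian tail at scale $L^{\chi}$, hence the claimed $e^{-L^\delta}$ after absorbing the $L^\gep$. One must also dispose of the low-probability events (path exiting the tube $\mathcal S_L$, atypically many scale-$n$ traps) using the $e^{-cL^2}$-type estimates from Proposition \ref{micmac}; since there are only $O(\log L)$ values of $n$, a union bound over $n$ costs nothing. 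The main obstacle I expect is the second step: making the ``a single large trap moves $\log\bar Z$ by at most $2^{-n\gamma}\cdot(\text{occupation time})$'' estimate both correct and tight enough simultaneously for all scales, because for the largest scales the occupation time saturates at the path length $\sim L$ while for small scales it is genuinely diffusive $\sim 2^{2n}$, and the bookkeeping of which regime dominates — together with checking that each regime's contribution is $\le L^{\chi+\gep}$ under the hypothesis $\xi\ge\tilde\xi(\alpha,\gamma,d)$ — is where all the real work lies.
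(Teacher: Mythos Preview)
Your plan has the right outline (resample the scale-$n$ traps, bound the effect of each change, apply a concentration inequality), but two genuine gaps would prevent it from reaching the exponent $\chi$ as defined.

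First, restricting attention to traps that meet the tube $\mathcal C_v^\xi$ is circular: Proposition~\ref{concentrat} is precisely the input used in Section~\ref{super} to prove that trajectories stay in the tube, so at this stage the conditioned path may a priori wander anywhere in $B(0,L^2)$, and the number of scale-$n$ traps there is of order $L^{2d}2^{-n\alpha}$, far too many for a bounded-difference argument.

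Second, and more structurally, Azuma--Hoeffding over any \emph{fixed} family of $N_n$ traps gives concentration only at scale $\sqrt{N_n}\,c_n$ with $c_n$ the uniform per-trap bound; this cannot exploit the fact that the path visits only $O(L/2^n)$ cubes of side $2^n$, which is dramatically fewer than the cubes carrying scale-$n$ traps in any region you could name in advance. The paper bypasses this by using Kesten's theorem (Proposition~\ref{Keste}) instead of Azuma: one reveals the scale-$n$ traps cube by cube along the finer filtration $\mathcal F_{n,k}$, and bounds the conditional variance of the $k$-th increment by $U_{n,k}\propto \bar\mu^\go_{0,v}(T_k\le T_{B(v)})\cdot 2^{n(2(1-\gamma)+d-\alpha)}$ (Lemma~\ref{dacodac}). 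The crucial point is that $\sum_k \bar\mu^\go_{0,v}(T_k\le T_{B(v)})$ equals the expected number of visited cubes $\bar\mu^\go_{0,v}[A_{T_{B(v)}}]$, whose tail is controlled at order $L(\log L)/2^n$ by Lemma~\ref{tuture}; this is exactly what produces the term $\tfrac12(1+\bar\xi(1+d-2\gamma-\alpha))$ in $\chi$, and no tube geometry enters. Your occupation-time heuristic $2^{-n\gamma}\cdot 2^{2n}$ for the per-trap effect is also off by a factor $2^n$: Lemma~\ref{trumu} obtains the sharper $c_1\sim 2^{n(1-\gamma)}(\log L)^2$ via a Harnack inequality on $\bar Z(\cdot,v)$, and that exponent is what makes the $(1-\gamma)\bar\xi$ term in $\chi$ come out correctly.
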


To prove the above Lemma we can adapt and use the technique developed in \cite{cf:SAOP}:
given $n$ we partition $\mathbb \bbR^d$ in disjoint cubes of side length $2^n$,

\begin{equation}
(2^n x +[0,2^n]^d)_x\in \bbZ^d,
\end{equation}
and index them by $\N$ in an arbitrary way and call that sequence $(C_{n,k})_{k\ge1}$.
Then one sets
\begin{equation}
\mathcal F_{n,k}:=\sigma\left( \go(A), A \in \mathcal B(\bbR^d\times \bbR_+), 
A\subset \left[(\bbR^d\times [1,2^{n-1}])\cup  (\bigcup_{i=1}^k C_{n,i}\times\left[2^{n-1},2^n\right]) \right]\right),
\end{equation}
which is the sigma algebra generated by traps of radius smaller than $2^{n-1}$ and
traps of radius in $[2^{n-1},2^n]$ whose centers are located in the set of cube $\bigcup_{i=1}^k C_{n,i}$ ($\go(A)$ above stands for the number of point in $A$
and $\mathcal B(\bbR^d\times \bbR_+)$ stands for the the sigma field of Borel-sets).

\medskip

One defines for $k\ge 0$
\begin{equation}
M_{n,k}:= \bbE\left[ \log \bar Z^{\go}(0,v)\ | \ \mathcal F_{n,k} \right].
\end{equation}

One remarks that for fixed $n$, $(M_{n,k})_{k\ge 0}$ is a martingale for the filtration $(\mathcal F_{n,k})_{k\ge 0}$.
It is an interpolation between $M_{n-1}=M_{n,0}$ and $M_n=M_{n,\infty}$.
This allows us to use a Martingale concentration result by Kesten   to prove Lemma \ref{trebo}.

\begin{proposition}
 
( From \cite[Theorem 3]{cf:K}\label{Keste} )

Let $(X_n)_{n\ge 0}$ be a martingale with respect to the filtration $\mathcal G_n$, (law $P$ expectation $E$)
that satisfies
\begin{equation}
|X_{n+1}-X_n|\le c_1, \forall n\ge \bbN.
\end{equation}
and 
\begin{equation}
E\left[ (X_{n+1}-X_n)^2 \  | \ \mathcal G_n \right] \le  E\left[ V_n \  | \ \mathcal G_n \right] 
\end{equation}
for some sequence of random variable $(V_n)_{n\ge 0}$ satisfying:
\begin{equation}
P(\sum_{n\ge0} V_n \ge x)\le e^{-c_2 x}.
\end{equation}
for all $x\ge c_3$.
\medskip

Set $x_0:= \max(\sqrt{c_3},c_1)$. Then $X_{\infty}=\lim_{n\to \infty} X_n$ exists and for all $x\le c_2x_0^{3}$
\begin{equation}
P(|X_{\infty}-X_0|\ge x)\le C\left(1+\frac{1}{c_2x_0}\right)e^{- \frac{x}{Cx_0}},
\end{equation}
where $C$ is a universal constant not depending on the $(c_i)_{i=1}^3$.
\end{proposition}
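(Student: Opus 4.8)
\emph{Plan.} The plan is to prove Proposition~\ref{Keste} by the exponential–supermartingale method, using a stopping‑time truncation to convert the in‑probability control of $\sum_n V_n$ into an almost‑sure bound on the accumulated conditional variance. First I would observe that, integrating the tail hypothesis, $E\bigl[\sum_n V_n\bigr]\le c_3+c_2^{-1}$, so $\sum_n E\bigl[(X_{n+1}-X_n)^2\bigr]\le E\bigl[\sum_n V_n\bigr]<\infty$; hence $(X_n)$ is $L^2$‑bounded and $X_\infty=\lim_n X_n$ exists a.s.\ and in $L^2$. Applying the asserted estimate also to $(-X_n)$ (which satisfies the same hypotheses) and summing, it suffices to bound $P(X_\infty-X_0\ge x)$, the factor $2$ being absorbed into $C$. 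Next, writing $D_{n+1}:=X_{n+1}-X_n$ and using $e^u\le 1+u+u^2$ for $|u|\le1$ together with $1+t\le e^t$, for every $0\le\theta\le1/c_1$ one has
\[
E\bigl[e^{\theta D_{n+1}}\mid\mathcal G_n\bigr]\le 1+\theta^2E\bigl[D_{n+1}^2\mid\mathcal G_n\bigr]\le\exp\!\bigl(\theta^2E[V_n\mid\mathcal G_n]\bigr).
\]
Introduce the predictable increasing process $\Lambda_n:=\sum_{k=0}^{n-1}E[V_k\mid\mathcal G_k]$ and its limit $\Lambda_\infty$; note $E[\Lambda_\infty]\le c_3+c_2^{-1}$, and — this is the one genuinely technical point of the reduction — $\Lambda_\infty$ inherits an exponential tail from $\sum_n V_n$, say $P(\Lambda_\infty\ge x)\le e^{-c_2'x}$ for $x\ge c_3'$ with $c_2'\asymp c_2,\ c_3'\asymp c_3$ (in the application $V_n$ is itself $\mathcal G_n$‑measurable, so $\Lambda_\infty=\sum_n V_n$ and there is nothing to check; in general one first replaces $V_n$ by $E[V_n\mid\mathcal G_n]$ and re‑derives an exponential‑moment bound for the sum).

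The core step is then a stopped exponential supermartingale. Fix $\theta\in(0,1/c_1]$ and a truncation level $b>0$, and set $T:=\inf\{n:\Lambda_{n+1}>b\}$, a $(\mathcal G_n)$‑stopping time for which $\Lambda_{n\wedge T}\le b$ for all $n$ (because $\Lambda$ is predictable and increasing). By the increment bound,
\[
N_n:=\exp\!\bigl(\theta(X_{n\wedge T}-X_0)-\theta^2\Lambda_{n\wedge T}\bigr)
\]
is a nonnegative supermartingale with $N_0=1$. Letting $n\to\infty$ and using Fatou, $E\bigl[\exp(\theta(X_T-X_0))\bigr]\le e^{\theta^2b}$, where $X_T$ means $X_\infty$ on $\{T=\infty\}$. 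Since $\{T=\infty\}=\{\Lambda_\infty\le b\}$ and $X_T=X_\infty$ there, Markov's inequality gives, for $b\ge c_3'$,
\[
P(X_\infty-X_0\ge x)\le P(X_T-X_0\ge x)+P(\Lambda_\infty>b)\le e^{-\theta x+\theta^2b}+e^{-c_2'b}.
\]

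It then remains to choose $\theta$ and $b$. For $x\le(\mathrm{const})\,x_0$ the asserted bound exceeds a fixed positive constant and is trivial. For larger $x$ I would split into two regimes. In the ``sub‑Gaussian'' regime one keeps the truncation at $b\asymp c_3\asymp x_0^2$ and takes $\theta\asymp x/x_0^2$ (admissible as long as $\theta\le1/c_1$), which yields $e^{-(\mathrm{const})x^2/x_0^2}\le e^{-x/(Cx_0)}$ together with a residual error $e^{-c_2'c_3}$ that is dominated by the main term exactly when $x\le c_2x_0^3$. In the ``exponential'' regime one saturates $\theta=1/c_1$ and lets $b$ grow with $x$ (with $b\ll x/\theta$ so that $\theta^2b\le\tfrac12\theta x$), again balancing the two terms against $e^{-x/(Cx_0)}$. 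Keeping track of the worst constant in $e^{-c_2'b}$ as $c_2x_0\to0$ is precisely what forces the prefactor $1+(c_2x_0)^{-1}$, and the restriction $x\le c_2x_0^3$ is exactly the range on which $e^{-\theta x+\theta^2b}$ can be arranged to dominate the tail term $e^{-c_2'b}$; optimising the free constants produces a universal $C$.

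\emph{Main obstacle.} The probabilistic engine (the stopped exponential supermartingale) is routine; the two delicate points are, first, transferring the exponential tail from $\sum_n V_n$ to the predictable compensator $\Lambda_\infty$, since conditional expectations can inflate pointwise tails — resolved by the measurability of $V_n$ in the application, or by an auxiliary exponential‑moment estimate in general — and, second, the bookkeeping in the parameter optimisation: satisfying simultaneously $\theta\le1/c_1$ and $b\ge c_3$ while making both error terms decay at rate $\sim x/x_0$ is what imposes the case distinction, the cutoff $x\le c_2x_0^3$, and the $1+(c_2x_0)^{-1}$ prefactor.
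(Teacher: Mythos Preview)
The paper does not give its own proof of this proposition: it is quoted verbatim as Theorem~3 of Kesten~\cite{cf:K} and used as a black box (the paper's work consists in verifying its hypotheses via Lemma~\ref{dacodac}). So there is nothing in the paper to compare your argument against.

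That said, your outline is essentially Kesten's own strategy: the stopped exponential supermartingale $\exp(\theta(X_{n\wedge T}-X_0)-\theta^2\Lambda_{n\wedge T})$, the truncation via a stopping time on the predictable variance, and the two-regime optimisation of $\theta$ and $b$ are exactly the ingredients of \cite[Theorem~3]{cf:K}. You also correctly flag the one genuinely delicate point, namely passing from the tail assumption on $\sum_n V_n$ to a tail bound on the predictable compensator $\Lambda_\infty=\sum_n E[V_n\mid\mathcal G_n]$. Your remark that ``in the application $V_n$ is itself $\mathcal G_n$-measurable'' is not quite accurate here: the $U_{n,k}$ of \eqref{truy2} involve $\bar\mu^\go_{0,y}$ and hence depend on the full environment, so they are not $\mathcal F_{n,k-1}$-measurable. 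Kesten handles this in general, and the paper simply invokes his result; if you want to close the loop yourself you would need the auxiliary exponential-moment argument you allude to (or to consult \cite{cf:K} directly). Apart from that caveat your sketch is sound.
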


Our proof of Lemma \ref{trebo} consists simply in checking, for each value of $n$, the assumptions  Proposition \ref{Keste} for the martingale
$(M_{n,k})_{k\ge 0}$.
For any cube $C_{n,k}$ one defines 

\begin{equation}
\tilde C_{n,k}:= \bigcup_{x\in C_{n,k}}  B(x, 2^n).
\end{equation}
(this is the zone where the $V$ can be modified when one adds traps of radius smaller than $2^n$ with center in
$C_{n,k}$)
 and $T_k$ to be  the hitting time of $\tilde C_{n,k}$.

\begin{lemma}\label{dacodac}
For every $n$ and $k$ set $\gD M_{n,k}=M_{n,k}-M_{n,k-1}$. 
One can find a constant $C$ such that for every $n$ and $k$,
\begin{equation}\label{borbro}
|\gD M_{n,k}|\le C(\log L)^2 2^{n(1-\gamma)}.
\end{equation}
and
\begin{equation}\label{truy}
\bbE\left[ |\gD M_{n,k}|^2 | \mathcal F_{n,k-1} \right]\le  \bbE\left[ U_{n,k} | \mathcal F_{n,k-1} \right]
\end{equation}
where
\begin{equation}\label{truy2}
U_{n,k}= C
\bar \mu^\go_{0,y}\left( T_{k} \le T_{B(y)}\right)  2^{n\left[2(1-\gamma)+d-\alpha\right]}(\log L)^2.
\end{equation}
Moreover
\begin{equation}\label{truy3}
\bbP\left[\sum_{k=1}^\infty U_{n,k}\ge x\right]\le e^{- C^{-2}2^{n(-1+2\gamma+\alpha-d)}(\log L)^{-2} x}
\end{equation}
for all $x\ge C^2 L 2^{n(1-2\gamma+d-\alpha)}(\log L)^3$.
\end{lemma}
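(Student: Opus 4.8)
The plan is to verify, for fixed $n$, the three requirements of Kesten's concentration inequality (Proposition \ref{Keste}) for the martingale $(M_{n,k})_{k\ge 0}$, following the strategy of \cite{cf:SAOP}. The key mechanism is that $M_{n,k}$ and $M_{n,k-1}$ differ only through the randomness of traps of radius in $[2^{n-1},2^n)$ with center in the cube $C_{n,k}$, and such traps can only modify $\bar V^\go$ inside the enlarged set $\tilde C_{n,k}$. So if the Brownian trajectory never hits $\tilde C_{n,k}$ before $T_{B(y)}$, the corresponding contributions to the two partition functions are identical. First I would fix a regular conditional version: write $M_{n,k}=\bbE[\log\bar Z^\go(0,v)\mid\mathcal F_{n,k}]$ and realize the increment $\gD M_{n,k}$ as an average, over the extra randomness revealed between $\mathcal F_{n,k-1}$ and $\mathcal F_{n,k}$ (an independent copy of the trap configuration in $C_{n,k}\times[2^{n-1},2^n)$), of the difference $\log\bar Z^\go(0,v)-\log\bar Z^{\go'}(0,v)$ where $\go'$ replaces those traps by the independent copy.

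For the uniform bound \eqref{borbro}: the potential $\bar V^\go$ restricted to scale-$n$ traps is capped at $2^{-n\gamma}\log L$, and the region $\tilde C_{n,k}$ where it can change has diameter $O(2^n)$. A trajectory that does enter $\tilde C_{n,k}$ spends there, on its excursions, a total time whose effect on $\int(\gl+\bar V)\dd t$ is controlled: the change in the exponent is at most (time spent near $\tilde C_{n,k}$) $\times$ (cap $2^{-n\gamma}\log L$), and using that $\bar V\le\log L$ everywhere together with the tubular lower bound $\log\bar Z^\go(0,v)\ge -CL\log L$ to renormalize, a Jensen/change-of-measure argument gives $|\gD M_{n,k}|\le C(\log L)^2 2^{n(1-\gamma)}$. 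Concretely: $|\log\bar Z^\go-\log\bar Z^{\go'}|$ is bounded by the sup over trajectories of the difference of exponents on the part of the path inside $\tilde C_{n,k}$, and a ball of radius $2^n$ is hit for a total occupation time $O(2^{2n})$-ish but weighted by the killing rate — the clean way is to bound the difference of the two integrals by $\log L\cdot(\text{local time in }\tilde C_{n,k})$, and then bound the $\mu^\go_{0,y}$-expectation of that local time by $C 2^{n}\cdot 2^{-n\gamma}\log L$ after the extra $\log L$ from renormalizing $\bar Z$; I would follow \cite{cf:SAOP} here for the exact bookkeeping.

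For the conditional variance bound \eqref{truy}–\eqref{truy2}: the increment $\gD M_{n,k}$ vanishes on the event $\{T_k>T_{B(y)}\}$ (trajectory avoids $\tilde C_{n,k}$), up to the reweighting by $\bar\mu^\go_{0,y}$. So $\bbE[|\gD M_{n,k}|^2\mid\mathcal F_{n,k-1}]$ is bounded by $(\text{uniform bound})^2$ times the conditional probability that the trap configuration forces a visit — but more efficiently, by writing $\gD M_{n,k}$ as a $\bar\mu^\go_{0,y}$-average and using that it is supported on $\{T_k\le T_{B(y)}\}$, one gets $|\gD M_{n,k}|\le C(\log L)2^{n(1-\gamma)}\bar\mu^\go_{0,y}(T_k\le T_{B(y)})^{1/2}$ or directly the first power, squaring of which yields \eqref{truy2} with the exponent $2(1-\gamma)+d-\alpha$ once the $2^{n(d-\alpha)}$ factor from the Poisson intensity of scale-$n$ traps in one cube is accounted for. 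Then \eqref{truy3} is a large-deviation estimate for $\sum_k U_{n,k}=C(\log L)^2 2^{n[2(1-\gamma)+d-\alpha]}\sum_k\bar\mu^\go_{0,y}(T_k\le T_{B(y)})$: the sum $\sum_k\bar\mu^\go_{0,y}(T_k\le T_{B(y)})$ counts the expected number of scale-$n$ cubes visited by the trajectory, which is $O(L/2^n)$ deterministically up to Gaussian tails on the trajectory length, and the only randomness is through $\go$; a union-bound-plus-Chernoff argument (or Lemma \ref{techin} in the appendix for the environment part) gives the stated exponential bound, with the threshold $x\ge C^2L2^{n(1-2\gamma+d-\alpha)}(\log L)^3$ being exactly the scale at which the typical value $O(L/2^n)$ of $\sum_k\bar\mu^\go_{0,y}(T_k\le T_{B(y)})$ matches $x/(C(\log L)^2 2^{n[2(1-\gamma)+d-\alpha]})$.

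The main obstacle I expect is \eqref{truy3}: controlling the random sum $\sum_k\bar\mu^\go_{0,y}(T_k\le T_{B(y)})$, because $\bar\mu^\go_{0,y}$ itself depends on $\go$, so the summands are neither independent nor bounded by a deterministic quantity in an obvious way. The resolution is to bound $\bar\mu^\go_{0,y}(T_k\le T_{B(y)})\le \bar Z^\go(0,y)^{-1}\bE_0[e^{-\int_0^{T_{B(y)}}(\gl+\bar V^\go)}\ind_{\{T_k\le T_{B(y)}\}}]$, use the tubular lower bound on $\bar Z^\go(0,y)$ and a Brownian estimate on the probability of hitting a far-away cube (giving geometric decay in the distance from $C_{n,k}$ to the tube $\mathcal C_y^\xi$), so that only $O(L/2^n)$ cubes contribute $O(1)$ and the rest contribute a summable tail; then the only genuinely random input is the occupation-time/local-time estimate, handled by the Poissonian concentration in Lemma \ref{techin}. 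The second-hardest point is getting the powers of $\log L$ and $2^n$ in \eqref{borbro} sharp rather than merely finite — but that is exactly where the cap $2^{-n\gamma}\log L$ in the definition of $\bar V^\go_L$ is designed to help, and I would mirror the computation in \cite{cf:SAOP} adapted to our scale-$n$ cap.
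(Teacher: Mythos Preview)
Your high-level plan --- independent-copy coupling, localization of the potential change to $\tilde C_{n,k}$, and the central role of the hitting event $\{T_k\le T_{B(y)}\}$ --- matches the paper's. Deferring the details of \eqref{borbro} to the mechanism in \cite{cf:SAOP} is defensible as a plan, but you should know that the actual ratio bound on $Z_{n,k-1}/Z_{n,k}$ (the paper's Lemma~\ref{trumu}) is not an occupation-time estimate: it applies the Markov property at $T_k$, then controls $\max_{z\in\partial\tilde C_{n,k}}Z_{n,k-1}(z,v)/Z_{n,k}(z,v)$ via a stopping-time tail estimate on $\int_0^{T_{B(v)}}(V_{n,k}-V_{n,k-1})_+(B_t)\dd t$ under the Gibbs measure together with a Harnack inequality for $x\mapsto\bar Z(x,v)$. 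Your ``time spent $\times$ cap'' heuristic does not close because the occupation time is not a priori bounded. For \eqref{truy}--\eqref{truy2} you correctly anticipate the factor $2^{n(d-\alpha)}$ but omit the mechanism that produces it: the paper writes the bound in terms of the Poisson count $\mathcal N_{n,k,+}$ of new traps in $C_{n,k}$ and then decouples $\mathcal N_{n,k,+}^2$ from $\bar\mu^\omega_{0,v}(T_k\le T_{B(v)})^2$ by a correlation inequality, using that the latter is non-increasing in $\mathcal N_{n,k,+}$.

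The genuine gap is \eqref{truy3}. Your plan --- bound each $\bar\mu^\omega_{0,y}(T_k\le T_{B(y)})$ separately via the tubular lower bound on $\bar Z^\omega$ plus Brownian hitting estimates with geometric decay away from the tube, then invoke Lemma~\ref{techin} --- does not work. Lemma~\ref{techin} is about the number of traps covering a point and has nothing to do with cubes visited by the trajectory. And the tubular lower bound $\bar Z^\omega\ge e^{-CL\log L}$ is far too weak to invert against per-cube Brownian probabilities: you would be multiplying each summand by $e^{CL\log L}$, which swamps any geometric decay. The paper's route is different and short: it uses the identity
\[
\sum_{k\ge 1}\bar\mu^\omega_{0,v}\bigl(T_k\le T_{B(v)}\bigr)=\bar\mu^\omega_{0,v}\bigl[A_{T_{B(v)}}\bigr],
\]
where $A_T$ is the number of enlarged cubes $\tilde C_x$ visited before time $T$, and then applies Lemma~\ref{tuture} from the appendix, which gives a deterministic-in-$\omega$ exponential tail $\bar\mu^\omega_{0,v}[A_{T_{B(v)}}\ge m]\le e^{-m2^n/C}$ for all $m\ge CL(\log L)/2^n$. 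Integrating this tail bounds $\sum_k U_{n,k}$ deterministically at the threshold stated in \eqref{truy3}, which is exactly what Kesten's hypothesis requires.
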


\begin{proof}[Proof of Lemma \ref{trebo}]
According to Lemma \ref{dacodac} the assumptions of Proposition  \ref{Keste} are satisfied with
\begin{equation}\begin{split}
c_1=c_1(L,n)&:= C(\log L)^2 2^{n(1-\gamma)}\le C(\log L)^2 L^{\bar \xi(1-\gamma)_+},\\
c_2=c_2(L,n)&:=C^{-2}2^{n(-1+2\gamma+\alpha-d)}(\log L)^{-2}\ge C^{-2}L^{-\bar\xi(1-2\gamma+d-\alpha)_+}(\log L)^{-2},\\
c_3=c_3(L,n)&:= C^2 L 2^{n(1-2\gamma+d-\alpha)}(\log L)^3\le C^2 (\log L)^3L^{1+\bar \xi(1-2\gamma+d-\alpha)_+}.
\end{split}\end{equation}
And therefore we get that for  $x_0(L)=C(\log L)^2 L^{\max \left(\bar \xi(1-\gamma)_+, \frac{1+\bar \xi(1-2\gamma+d-\alpha)_+}{2}\right)}$,
for all $t\le c_2x_0^2/C$ (and note that $c_2x_0^2\ge L$)

\begin{equation}
\bbP\left[|M_{n-1}-M_n|\ge Cx_0 t  \right]\le (1+L^d)e^{-t}.
\end{equation}
provided the constance $C$ has been chosen large enough.
This is enough to conclude.
\end{proof}

At this point of the proof, we can explain a bit better our choice for the multi-scale analysis, and for the modification of the potential.
Both are aimed to optimize the constant $c_1$, $c_2$, $c_3$ above.
 
\begin{proof}[Proof of Lemma \ref{dacodac}]
One defines $\tilde \go$ , to be an independent copy of the environment $\go$ (let its law be denoted by $\tilde \bbE$).
Let $\go_{n,k}$  be an interpolation between $\go$ and $\tilde \go$ defined by
\begin{multline}
\go_{n,k}:=\left\{ (\go_i,r_i)\ \big| \ (\go_i,r_i)\in (\bbR^d\times [1,2^{n-1}])\cup  (\bigcup_{j=1}^k C_{n,j}\times\left[2^{n-1},2^n\right])\right\}\\ 
\cup \left\{ (\tilde \go_i,r_i) \ \big| \ 
(\go_i,r_i)\in (\bigcup_{j=k+1}^\infty C_{n,j}\times\left[2^{n-1},2^n\right])\cup  (\bbR^d\times [2^{n},\infty)) \right\}.
\end{multline}
And set 
\begin{equation}\begin{split}
V_{n,k}&:=\bar V^{\go_{n,k}},\\
Z_{n,k}(u,v)&:= \bar Z^{\go_{n,k}}(u,v)\\
\mu^{n,k}_{u,v}&:=\bar \mu^{\go_{n,k}}_{u,v}
\end{split}\end{equation} 
With this notation, note that $\go_{n,k}$ has the same distribution as $\go$ and that
\begin{equation}
M_{n,k}=  \tilde \bbE\left[ \log  Z_{n,k}(0,v)\right].
\end{equation} 
Furthermore
\begin{equation} \label{ledelta}
|\gD M_{n,k}|\le \tilde \bbE \left[\log \max \left( \frac{Z_{n,k-1}}{Z_{n,k}}(0,y),\frac{Z_{n,k}}{Z_{n,k-1}}(0,y)\right) \right]
\end{equation}

The first step of our proof is to bound 
$ \frac{Z_{n,k-1}}{Z_{n,k}}(0,y)$ and $\frac{Z_{n,k}}{Z_{n,k-1}}(0,y)$ by simpler functional depending only on 
$V_{n,k}$, $V_{n,k-1}$ in $\tilde C_k$. We use the following (abuse of) notation 

\begin{equation}
(V_{n,k}-V_{n,k-1})_+:= \max_{x\in \bbR^d} (V_{n,k}(x)-V_{n,k-1}(x))_+
\end{equation}

\begin{lemma}\label{trumu}
There exists a constant $C$ such that for all $n$ and $k$, for all $v$ in $\bbR^d$
\begin{equation}\begin{split}
\frac{Z_{n,k-1}}{Z_{n,k}}(0,v)
&\le 1+\mu^{n,k}_{0,v}\left[ T_k \le T_{B(v)}\right] (e^{C(V_{n,k}-V_{n,k-1})_+ 2^n \log L}-1),\\
\frac{Z_{n,k}}{Z_{n,k-1}}(0,v)&\le1+ \mu^{n,k-1}_{0,v}\left[ T_k \le T_{B(v)}\right] (e^{C(V_{n,k-1}-V_{n,k})_+ 2^n \log L}-1).
\end{split}\end{equation}
\end{lemma}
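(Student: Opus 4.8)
The plan is, for each pair $(n,k)$, to reduce both inequalities to a \emph{pointwise} comparison $Z_{n,k-1}(w,v)\le e^{c}Z_{n,k}(w,v)$ valid for every starting point $w\in\overline{\tilde C_{n,k}}$, and then to derive that pointwise comparison from an occupation-time estimate for the motion under $\mu^{n,k}_{w,v}$. The two inequalities being symmetric under exchanging $\go$ and $\tilde\go$, I only discuss the first.

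First I would use that $\mu^{n,k}_{0,v}$ and $\mu^{n,k-1}_{0,v}$ are mutually absolutely continuous, their densities with respect to $\bP_0$ differing only through the potential, so that
\begin{equation}\label{sk1}
\frac{Z_{n,k-1}}{Z_{n,k}}(0,v)=\bE^{\mu^{n,k}_{0,v}}\Big[\exp\Big(\int_0^{T_{B(v)}}\big(V_{n,k}-V_{n,k-1}\big)(B_t)\dd t\Big)\Big],
\end{equation}
where $\bE^{\mu^{n,k}_{0,v}}$ denotes the expectation under $\mu^{n,k}_{0,v}$. Since the only traps distinguishing $\go_{n,k}$ from $\go_{n,k-1}$ are those of radius in $[2^{n-1},2^n]$ centred in $C_{n,k}$, which are supported in $\tilde C_{n,k}$, the potentials $V_{n,k}$ and $V_{n,k-1}$ agree outside $\tilde C_{n,k}$; hence the integrand in \eqref{sk1} is $1$ on $\{T_k>T_{B(v)}\}$, and on $\{T_k\le T_{B(v)}\}$ only the portion of the trajectory after $T_k$ contributes. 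Applying the strong Markov property at $T_k$ — on $\{T_k\le T_{B(v)}\}$ the process $(B_{T_k+s})_{s\ge0}$ has law $\mu^{n,k}_{B_{T_k},v}$, with $B_{T_k}\in\partial\tilde C_{n,k}$ — turns \eqref{sk1} into
\begin{equation*}
\frac{Z_{n,k-1}}{Z_{n,k}}(0,v)=\mu^{n,k}_{0,v}\big(T_k>T_{B(v)}\big)+\bE^{\mu^{n,k}_{0,v}}\Big[\ind_{\{T_k\le T_{B(v)}\}}\,\tfrac{Z_{n,k-1}}{Z_{n,k}}(B_{T_k},v)\Big],
\end{equation*}
so that bounding $\tfrac{Z_{n,k-1}}{Z_{n,k}}(B_{T_k},v)$ by its supremum over $B_{T_k}\in\partial\tilde C_{n,k}$ reduces the first inequality of the lemma to
\begin{equation}\label{sk2}
\frac{Z_{n,k-1}}{Z_{n,k}}(w,v)\le\exp\!\big(C\,(V_{n,k}-V_{n,k-1})_+\,2^n\log L\big)\qquad\text{for all }w\in\overline{\tilde C_{n,k}}\text{ and all }v .
\end{equation}

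To prove \eqref{sk2}, set $a:=(V_{n,k}-V_{n,k-1})_+$, and note that $a\le 2^{-(n-1)\gamma}\log L$ because the level-$(n-1)$ part of the modified potential is capped at $2^{-(n-1)\gamma}\log L$. From the pointwise bound $V_{n,k-1}\ge V_{n,k}-a\,\ind_{\tilde C_{n,k}}$ and \eqref{sk1} based at $w$,
\begin{equation*}
\frac{Z_{n,k-1}}{Z_{n,k}}(w,v)=\bE^{\mu^{n,k}_{w,v}}\Big[\exp\Big(\int_0^{T_{B(v)}}(V_{n,k}-V_{n,k-1})(B_t)\dd t\Big)\Big]\le\bE^{\mu^{n,k}_{w,v}}\big[e^{a\ell}\big],\qquad \ell:=\int_0^{T_{B(v)}}\ind_{\tilde C_{n,k}}(B_t)\dd t ,
\end{equation*}
so \eqref{sk2} follows from an exponential-moment bound for the occupation time $\ell$ of $\tilde C_{n,k}$ — a set of diameter $O(2^n)$ — under $\mu^{n,k}_{w,v}$. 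The estimate to aim for is
\begin{equation}\label{sk3}
\mu^{n,k}_{w,v}(\ell>t)\le e^{C2^n\log L}\,e^{-\gl t}\qquad(t\ge0),
\end{equation}
which I would obtain by letting $\sigma_t$ be the first time at which the occupation time of $\tilde C_{n,k}$ reaches $t$: on $\{\ell>t\}$ one has $\sigma_t<T_{B(v)}$, $\sigma_t\ge t$ and $B_{\sigma_t}\in\overline{\tilde C_{n,k}}$, hence $|B_{\sigma_t}-w|\le C2^n$. Applying the strong Markov property at $\sigma_t$, dropping $V_{n,k}\ge0$, using $e^{-\gl\sigma_t}\le e^{-\gl t}$, and then comparing $Z_{n,k}(B_{\sigma_t},v)$ with $Z_{n,k}(w,v)$ by concatenating a Brownian tube of length $O(2^n)$ from $w$ to $B_{\sigma_t}$ — which, since $V_{n,k}\le C\log L$ everywhere, a standard tube estimate shows costs at most a factor $e^{C2^n\log L}$ — gives \eqref{sk3}. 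Since $a\le 2^{-(n-1)\gamma}\log L\le\gl/2$ once $n$ exceeds a fixed constant, integrating \eqref{sk3} against $t\mapsto e^{at}$ yields $\bE^{\mu^{n,k}_{w,v}}[e^{a\ell}]\le C'\exp(C'a\,2^n\log L)$, i.e.\ \eqref{sk2} after relabelling constants (when $a\,2^n\log L$ stays bounded the inequality is trivial); the finitely many remaining small values of $n$, for which $\tilde C_{n,k}$ has bounded diameter, are treated by the same argument, using only that the potentials are bounded by $C\log L$.

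The step I expect to be the main obstacle is \eqref{sk3}, and especially making it effective uniformly over the whole range $1\le n\le\bar\xi\log_2 L$ of scales. The naive route — bounding $\ell\le T_{B(v)}$ and dividing by a crude lower bound $Z_{n,k}(w,v)\ge e^{-L^{O(1)}}$ — is hopelessly lossy as soon as $2^n\ll L$, so one genuinely has to exploit that, because of the killing term, the trajectory under $\mu^{n,k}_{w,v}$ moves essentially ballistically and therefore spends only a time of order $2^n$ in a region of diameter $2^n$; the Harnack-type comparison of $Z_{n,k}(\cdot,v)$ between nearby points through a region where the potential is at most $C\log L$ is what makes this quantitative, and it is precisely this comparison that produces the extra $\log L$ in \eqref{sk3}, and hence the factor $2^n\log L$ in the statement of the lemma.
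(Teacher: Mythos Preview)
Your overall strategy matches the paper's: reduce via the strong Markov property at $T_k$ to the pointwise ratio bound \eqref{sk2}, then control the exponential moment of the cost accumulated inside $\tilde C_{n,k}$ by a tail estimate obtained from another strong-Markov step together with a Harnack-type comparison of $Z_{n,k}(\cdot,v)$ over a region of diameter $O(2^n)$.

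There is, however, a genuine gap in the integrability step. Having replaced $(V_{n,k}-V_{n,k-1})(B_t)$ by $a\,\ind_{\tilde C_{n,k}}(B_t)$ with $a:=(V_{n,k}-V_{n,k-1})_+$, your tail \eqref{sk3} decays only at rate $\gl$, so integrating $e^{at}$ against it requires $a<\gl$. The claim that $a\le 2^{-(n-1)\gga}\log L\le\gl/2$ ``once $n$ exceeds a fixed constant'' is incorrect: it needs $n\ge 1+\gga^{-1}\log_2(2\gl^{-1}\log L)$, leaving of order $\log\log L$ scales uncovered, not finitely many independent of $L$; on those scales $a$ can be as large as $\log L$ and your integral diverges. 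Your fallback (``bounded-diameter cubes'') does not apply either, since the diameters of the $\tilde C_{n,k}$ in this range go up to a power of $\log L$.

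The paper sidesteps this by \emph{not} reducing to the occupation time. It works directly with $X:=\int_0^{T_{B(v)}}(V_{n,k}-V_{n,k-1})_+(B_t)\dd t$: at the stopping time $\tau_u:=\inf\{t:\int_0^t(V_{n,k}-V_{n,k-1})_+(B_s)\dd s=u\}$ it keeps the full killing rate $\gl+V_{n,k}$ and uses the pointwise inequality $V_{n,k}\ge(V_{n,k}-V_{n,k-1})_+$ (valid because $V_{n,k-1}\ge0$) to retain $\int_0^{\tau_u}V_{n,k}(B_s)\dd s\ge u$ in the exponent. This produces the sharper tail $\mu^{n,k}_{z,v}(X\ge u)\le e^{c\,2^n\log L}\exp\!\big(-(\gl/a+1)\,u\big)$, whose extra factor $e^{-u}$ makes $\int_0^\infty e^{u}\,\mu(X\ge u)\dd u$ converge for \emph{every} value of $a$, uniformly in $n$. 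In short: do not drop $V_{n,k}\ge 0$ in the Markov step --- that is precisely the term that rescues the small-$n$ regime.
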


\begin{proof}

By symmetry of the problem it is sufficient to show that 

\begin{equation}
\frac{Z_{n,k-1}}{Z_{n,k}}(0,v)-1\le \mu^{n,k}_{0,v}\left[ T_k \le T_{B(v)}\right] (e^{C(V_{n,k}-V_{n,k-1})_+ 2^n \log L}-1)
\end{equation}
Using the Markov property at $T_k$ one gets

\begin{equation}
\frac{Z_{n,k-1}}{Z_{n,k}}(0,v)=\mu^{n,k}_{0,v}\left[T_{k} > T_{B(v)}\right]+\mu^{n,k}_{0,v}\left[ T_k\le T_{B(v)}\ ; \ \frac{Z_{n,k-1}}{Z_{n,k}}(B_{T_k},v)\right],
\end{equation}
and hence
\begin{multline}
\frac{Z_{n,k-1}}{Z_{n,k}}(0,v)-1=\mu^{n,k}_{0,v}\left[ T_k\le  T_{B(v)}\ ; \ \left(\frac{Z_{n,k-1}}{Z_{n,k}}(B_{T_k},v)-1\right)\right]\\
\le \mu^{n,k}_{0,v}\left[ T_k \le T_{B(v)}\right] \max_{z\in \partial \tilde C_k}\left(\frac{Z_{n,k-1}}{Z_{n,k}}(z,v)-1\right).
\end{multline}
We are left with showing that for all $z\in \partial \tilde C_k$

\begin{equation}
\frac{Z_{n,k-1}}{Z_{n,k}}(z,v)\le e^{C(V_{n,k}-V_{n,k-1})_+ 2^n \log L}.
\end{equation}
One has
\begin{multline}
\frac{Z_{n,k-1}}{Z_{n,k}}(z,v)=\mu^{n,k}_{z,v}\left[e^{\int_{0}^{T_{B(v)}} (V_{n,k}-V_{n,k-1})(B_t)\dd t}\right]\\
\le
\mu^{n,k}_{z,v}\left[e^{\int_{0}^{T_{B(v)}} (V_{n,k}-V_{n,k-1})_+(B_t)\dd t}\right].
\end{multline}
We study the tail distribution of the variable $\int_{0}^{T_{B(v)}} (V_{n,k}-V_{n,k-1})_+(B_t)\dd t$ under $\mu^{n,k}_{z,v}$.
On the event $\int_{0}^{T_{B(v)}} (V_{n,k}-V_{n,k-1})_+(B_t)\dd t>a$
one can define 
\begin{equation}
\tau_a:= \min\left\{ t>0\  |\  \int_{0}^{t} (V_{n,k}-V_{n,k-1})_+(B_s)\dd s=a\right\}.
\end{equation}
Necessarily, (as $V_{n,k}-V_{n,k-1}\equiv 0$ outside of $\tilde C_{n,k}$)
\begin{equation}
\tau_a\ge \frac{a}{ (V_{n,k}-V_{n,k-1})_+} \text{ and } B_{\tau_a}\in \tilde C_{n,k}.
\end{equation}

Using the Markov property and the above one gets that 

\begin{multline}
\mu^{n,k}_{z,v}\left(\int_{0}^{T_{B(v)}}\left( (V_{n,k}-V_{n,k-1})_+(B_t)\dd t\ge a\right)\right)\\
=\frac{1}{Z_{n,k}(z,v)}
\bE_z \left[ e^{-\int_0^{\tau_a}( \gl+V_{k}(B_t))\dd t} Z_{n,k}(B_{\tau_a},v)\right]\\
\\ \le \frac{1}{Z_{n,k}(z,v)}
\bE_z \left[ e^{-\gl \tau_a-a} Z_{n,k}(B_{\tau_T},v)\right]\\
\le e^{-\left(\frac{\gl}{ (V_{n,k}-V_{n,k-1})_+}+1\right)a} \max_{x\in \tilde C_k} \frac{Z_{n,k}(x,v)} {Z_{n,k}(z,v)}
\le  e^{-\left(\frac{\gl}{ (V_{n,k}-V_{n,k-1})_+}+1\right)a +c 2^n\log L},
\end{multline}
where in the last inequality one used an Harnack-type inequality (it is proved in (2.22) pp. 225 in \cite{cf:S} for $x$ and $z$ such that $|x-z|\le 1$ so that we can get the result below by iterating it) there exists a constant $c$ such that :
\begin{equation}
\forall x \forall z \in \bbR^d, \quad \left|\log  \frac{Z_{n,k-1}(x,v)} {Z_{n,k-1}(z,v)}\right|\le c (1+|x-z|) \| V_{n,k-1} \|_\infty.
\end{equation}
Hence
\begin{multline}
\mu^{n,k}_{z,v}\left[e^{\int_{0}^{T_{B(v)}} (V_{n,k}-V_{n,k-1})_+(B_t)\dd t}\right]\\
\le 1+ \int_0^{\infty}  e^{a} \min\left(1,e^{ -\left(\frac{\gl}{ (V_{n,k}-V_{n,k-1})_+}+1\right)a+c 2^n\log L} \right)\dd a\\
\\= \frac{\gl+(V_{n,k}-V_{n,k-1})_+}{\gl} e^{\frac{c 2^n\log L(V_{n,k}-V_{n,k-1})_+}{\gl+(V_{n,k}-V_{n,k-1})_+}}.
\end{multline}

\end{proof}

Let us introduce the notation
\begin{equation}\begin{split}
\mathcal N_{n,k,+}&:=|\{\text{ points  that are in $\go_{n,k}$ and not in $\go_{n,k-1}$}\}|,\\
\mathcal N_{n,k,-}&:=|\{\text{ points  that are in $\go_{n,k-1}$ and not in $\go_{n,k}$}\}|.
\end{split}\end{equation}
These two quantities are independent Poisson variable of mean 
$2^{n(d-\alpha)}(2^\alpha-1)$.
According to the definition of $\bar V^\go$ and $V_{n,k}$ one has
\begin{equation}\begin{split} \label{rzez}
(V_{n,k}-V_{n,k-1})_+&\le 2^{-(n-1) \gga} (\mathcal N_{n,k,+}\vee \log L),\\
(V_{n,k-1}-V_{n,k})_+&\le 2^{-(n-1) \gga} (\mathcal N_{n,k,-}\vee \log L).
\end{split}\end{equation}
Combining Lemma \ref{trumu}, equations \eqref{ledelta} and \eqref{rzez}, one gets \eqref{borbro}.
In order to get \eqref{truy} we 
use equation \eqref{ledelta}  to get that
\begin{equation}
|\gD M_k|^2\le \tilde \bbE \left[ \max \left( \left(\log \frac{Z_{n,k-1}}{Z_{n,k}}(0,v)\right)^2,\left(\log \frac{Z_{n,k}}{Z_{n,k-1}}(0,v)\right)^2 \right)\right].
\end{equation}
And from Lemma \ref{trumu} ,

\begin{multline}
\log \frac{Z_{n,k-1}}{Z_{n,k}}(0,v)\le \log \left(1+\mu^{n,k}_{0,v}(T_k \le T_{B(v)})e^{C 2^n (\log L)(V_{n,k-1}-V_{n,k-1})_+}\right)\\
\stackrel{\text{(Jensen)}}{\le} C 2^n (\log L)\mu^{n,k}_{0,v}(T_k \le T_{B(v)})(V_{n,k-1}-V_{n,k-1})_+\\
\stackrel{\eqref{rzez}}{\le}C 2^{n(1-\gamma)} (\log L) \mu^{n,k}_{0,v}(T_k \le T_{B(v)}) \mathcal N_{n,k,+}.
\end{multline}
One can get an analogous bound for $\log \frac{Z_{n,k}}{Z_{n,k-1}}(0,v)$ and get that 

\begin{multline}\label{piou11}
|\gD M_k|^2\le C^2 4^{n(1-\gamma)} (\log L)^2\\
\times \tilde \bbE \left[ \max\left(\mathcal N_{n,k,+}^2\mu^{n,k}_{0,v}(T_k \le T_{B(v)})^2,\mathcal N_{n,k,-}^2\mu^{n,k-1}_{0,v}(T_k \le T_{B(v)})^2\right)\right].
\end{multline}
Replacing $\max$ by a sum and conditioning to $\mathcal F_{n,k-1}$ one gets that to bound $\bbE\left[|\gD M_k|^2\ | \ \mathcal F_{n,k-1}\right]$
it is sufficient to bound 
\begin{equation}\label{piou0}
 \begin{split}
  &\bbE\left[\tilde \bbE \left[\mathcal N_{n,k,+}^2\mu^{n,k}_{0,v}(T_k \le T_{B(v)})^2\right] \ | \ \mathcal F_{n,k-1}\right],\\
  &\bbE\left[\tilde \bbE \left[\mathcal N_{n,k,-}^2\mu^{n,k-1}_{0,v}(T_k \le T_{B(v)})^2\right] \ | \ \mathcal F_{n,k-1}\right].
 \end{split}
\end{equation}
The reader can check that 
\begin{equation}\begin{split}
\tilde \bbE \left[\mathcal N_{n,k,+}^2\mu^{n,k}_{0,v}(T_k \le T_{B(v)})^2\right]&=\bbE \left[\mathcal N_{n,k,+}^2\bar\mu^{\go}_{0,v}(T_k \le T_{B(v)})^2  \ | \ \mathcal F_{n,k}\right],\\
\tilde \bbE \left[\mathcal N_{n,k,-}^2\mu^{n,k-1}_{0,v}(T_k \le T_{B(v)})^2\right]&=\bbE \left[\mathcal N_{n,k,+}^2\bar\mu^{\go}_{0,v}(T_k \le T_{B(v)})^2  \ | \ \mathcal F_{n,k-1}\right].
\end{split}\end{equation}
and thus we have just to bound from above control the r.h.s\ of the second line.
We rewrite it as follows
\begin{multline}\label{piou}
\bbE\left[  \bar \mu^{\go}_{0,v} \left(T_k \le T_{B(v)}\right)^2 \mathcal N^2_{n,k,+} \ | \ \mathcal F_{n,k-1}\right]\\
=\bbE\left[  \bbE\left[\bar\mu^{\go}_{0,v} \left(T_k \le T_{B(v)}\right)^2 \ | \ \mathcal F_{n,k-1} \vee \sigma (\mathcal N_{n,k,+})\right]\mathcal N^2_{n,k,+} \ | \ \mathcal F_{n,k-1}\right]
\end{multline}
Then one can remark that
$$\bbE\left[\bar\mu^{\go}_{0,v} \left(T_k \le T_{B(v)}\right)^2 \ | \ \mathcal F_{n,k-1}\vee \sigma (\mathcal N_{n,k,+})\right] $$
is a non-increasing function of $\mathcal N_{n,k,+}$.
If $f$ is a non-increasing function of $\mathcal N$, $g$ a non-decreasing function of $\mathcal N$ then
\begin{equation}
\bbE\left[f(\mathcal N)g(\mathcal N)\right]\le \bbE\left[f(\mathcal N)\right] \bbE\left[ g(\mathcal N)\right]
\end{equation}
Therefore the right hand-side of \eqref{piou} is less than 
\begin{multline}
\bbE\left[\bbE\left[\mathcal N^2_{n,k,+}\right] \bar \mu^{\go}_{0,v} \left(T_k \le T_{B(v)}\right)^2 \ | \ \mathcal F_{n,k-1} \right]\\
= \bbE\left[\mathcal N^2_{n,k,+}\right]^2
\bbE\left[\bar \mu^\go_{0,v} \left(T_k \le T_{B(v)}\right)^2 \ | \ \mathcal F_{n,k-1} \right]\\
\le C 2^{n(d-\alpha)}\bbE\left[\bar \mu^\go_{0,v} \left(T_k \le T_{B(v)}\right)\ | \ \mathcal F_{n,k-1} \right] .
\end{multline}
which combined with \eqref{piou11}, \eqref{piou0} and \eqref{piou} ends  the proof of \eqref{truy}-\eqref{truy2}.

%
As for \eqref{truy3}, notice that 
\begin{equation}
\sum_{k=1}^\infty \bar \mu^\go_{0,v} \left(T_k \le T_{B(v)}\right)=\bar \mu^\go_{0,v} \left(A_{T_{B(y)}}\right),
\end{equation}
where
\begin{equation}
A_T:=| \{ x\in \Z^d \ | \ \tilde C_x\cap \{ B_t\ ,\ t\in [0,T]\}\ne \emptyset \}|.
\end{equation}
denotes the number of different $\tilde C_x$ visited before $T$.
Large deviation estimates for the upper-tail distribution of  $A_{T_{B(y)}}$ under $\bar \mu^\go_{0,v}$ are computed 
in the appendix (Lemma \ref{tuture})
and they allow us to obtain \eqref{truy3}.

\end{proof}

\section{Volume exponent from fluctuation}

\label{super}
\subsection{Preliminary result}

Before going in to the proof of Theorem  \ref{turiaf1} and \ref{turiaf}, we need a result that controls
the growth of the expected value of $\log \bar Z^\go(0,y)$ as a function of $|y|$.

Set $y_r:=(r,0,\dots,0)$ and define
\begin{equation}
\alpha(r):= -\bbE\left[ \log  \bar Z^{\go}(0,y_r) \right], 
 \end{equation}

It is natural to think that $r\mapsto \alpha(r)$ is increasing function of $r$ and that its growth is linear, but we cannot prove it.
Instead we prove a weaker result that will be sufficient to our purpose.

\begin{lemma}\label{witchi}
There exists a constant $c=c(\gl)$ such that for any $l\ge L^{\chi+\gep}$, $r\le 2L$ one has, for all large enough $L$,
\begin{equation}
\alpha(r+l)\ge \alpha(r)+  c l.
\end{equation}

\end{lemma}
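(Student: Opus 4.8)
The plan is to compare the point-to-point partition function with the point-to-hyperplane one, for which a linear lower bound on the free energy is elementary, and then to argue that the two are close enough. Introduce $\hat Z^\go(0,\cH_r):=\bE_0[\exp(-\int_0^{T_{\cH_r}}(\gl+\bar V^\go_L(B_t))\dd t)]$ and $\gb(r):=-\bbE\log\hat Z^\go(0,\cH_r)$. Since $B(y_{r+l},1)\subset\{x_1\ge r+l-1\}$ when $l>1$, any trajectory reaching $B(y_{r+l})$ must first hit $\cH_r$; the strong Markov property at $T_{\cH_r}$, together with the environment-free bound $\bar Z^\go(z,y_{r+l})\le\bE_z[e^{-\gl T_{\cH_{r+l-1}}}]=e^{-(l-1)\sqrt{2\gl}}$ for $z\in\cH_r$ (the first coordinate has to travel a distance $l-1$), gives
\begin{equation*}
\bar Z^\go(0,y_{r+l})\le e^{-(l-1)\sqrt{2\gl}}\,\hat Z^\go(0,\cH_r),\qquad\text{hence}\qquad\alpha(r+l)\ge\gb(r)+(l-1)\sqrt{2\gl}.
\end{equation*}
The same computation applied to two hyperplanes yields $\gb(s)\ge\gb(r)+(s-r)\sqrt{2\gl}$ for $s\ge r$, so $\gb$ grows at least linearly. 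Hence it remains to show that $\alpha(r)-\gb(r)$ is negligible compared with $l$, say $\alpha(r)\le\gb(r)+L^{\chi+\gep/2}$ for all $r\le2L$: granting this, $\alpha(r+l)\ge\alpha(r)+(l-1)\sqrt{2\gl}-L^{\chi+\gep/2}\ge\alpha(r)+\tfrac12\sqrt{2\gl}\,l$ whenever $l\ge L^{\chi+\gep}$ and $L$ is large, which is the lemma with $c=\tfrac12\sqrt{2\gl}$.

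The reverse bound $\gb(r)\le\alpha(r)+O(\log L)$ is immediate ($B(y_{r+1},1)\subset\{x_1\ge r\}$ gives $\hat Z^\go(0,\cH_r)\ge\bar Z^\go(0,y_{r+1})$, and one removes the unit shift with the Harnack inequality used in Lemma \ref{trumu}, recalling $\|\bar V^\go_L\|_\infty\le C\log L$). The substance is $\alpha(r)\le\gb(r)+L^{\chi+\gep/2}$, i.e.\ that, conditioned on hitting $\cH_r$, the trajectory crosses it close to the axis, so that reaching $\cH_r$ is not much cheaper than reaching the fixed unit ball $B(y_r)$ on it. One would write $\hat Z^\go(0,\cH_r)\le\sum_{k\in\Z^{d-1}}\bar Z^{\go}_{R_0}(0,(r,k))$, where $\bar Z^{\go}_{R_0}$ is the analogue of $\bar Z^\go$ with a ball of fixed radius $R_0=R_0(d)$ (a trajectory that reaches $\cH_r$ is then inside $B((r,k),R_0)$ for the nearest lattice point $k$). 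The terms with $|k|\le L^{\chi+\gep/2}/(C\log L)$ are each at most $e^{C(1+|k|)\log L}\bar Z^\go(0,y_r)\le e^{L^{\chi+\gep/2}}\bar Z^\go(0,y_r)$ by Harnack, so they contribute at most $-\alpha(r)+L^{\chi+\gep/2}+O(\log L)$ to $\bbE\log$. For the far terms one combines: (i) the rotational invariance of the law of $\bar V^\go_L$, so that $\bbE\log\bar Z^{\go}_{R_0}(0,(r,k))=-\alpha_{R_0}(\sqrt{r^2+|k|^2})$ depends only on $\sqrt{r^2+|k|^2}$; (ii) the estimates $\alpha_{R_0}(s)\ge\alpha(s)-O(\log L)$ and a near-monotonicity $\alpha(s)\ge\alpha(r)-O(\log L)$ for $r\le s\le2L$ (which one should extract from the same hyperplane comparison) together with the superlinearity of $\gb$, which make an off-axis crossing at transversal distance $\rho$ cost an extra amount at least $c\min(\rho,\rho^2/r)$ in the exponent; and (iii) Proposition \ref{concentrat}, applied to each of the polynomially many relevant $\bar Z^{\go}_{R_0}(0,(r,k))$ with a union bound, to replace them by their means up to $L^{\chi+\gep/4}$. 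Summing the convergent series in $k$, and bounding the (probability $e^{-L^\delta}$) failure event by the deterministic estimates $\bar Z^\go,\hat Z^\go\in[e^{-CL\log L},1]$, gives $\bbE\log\hat Z^\go(0,\cH_r)\le-\alpha(r)+O(L^{\chi+\gep/2})$, as needed (after relabelling $\gep$).

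I expect the hard part to be precisely step (iii) combined with (i)–(ii): one needs the concentration exponent to be exactly $\chi$ — a crude, non-multiscale concentration bound would leave an error exceeding $l$ and the argument would collapse — and all the remaining errors ($\log L$ factors, Harnack costs, the near-monotonicity of $\alpha$) must be kept genuinely of size at most $L^{\chi+\gep/2}$ before the summation over transversal crossing positions is carried out. A secondary nuisance is the range $|u-v|\le2L$ in Proposition \ref{concentrat}, which restricts the targets $(r,k)$ one may compare directly; this is harmless because the lemma is only applied in Section \ref{super} with $r$ bounded by a fixed multiple of $L$, and Proposition \ref{concentrat} extends, with the same proof, to $|u-v|\le CL$.
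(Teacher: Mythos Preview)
Your approach takes an unnecessary detour through the point-to-hyperplane quantity $\gb(r)$, and the step you flag as ``the substance'' --- proving $\alpha(r)\le\gb(r)+L^{\chi+\gep/2}$ --- contains a circularity. To control the far off-axis terms $\bar Z^{\go}_{R_0}(0,(r,k))$ you invoke a near-monotonicity $\alpha(s)\ge\alpha(r)-O(\log L)$ for $s\ge r$, and you say it ``should extract from the same hyperplane comparison''. But the hyperplane comparison only yields $\alpha(s)\ge\gb(s-1)$; to get back to $\alpha(r)$ from there you need $\gb(s-1)\ge\alpha(r)-\text{small}$, which is precisely the inequality $\gb\ge\alpha-L^{\chi+\gep/2}$ you are trying to establish. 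Without an independent proof of near-monotonicity, the far sum $\sum_{|k|\text{ large}}e^{-\alpha_{R_0}(\sqrt{r^2+|k|^2})}$ cannot be shown to be dominated by $e^{-\alpha(r)}$, and your estimate $\bbE\log\hat Z^\go(0,\cH_r)\le-\alpha(r)+O(L^{\chi+\gep/2})$ collapses to the vacuous $-\gb(r)\le-\gb(r)+O(L^{\chi+\gep/2})$.

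The paper avoids this entirely by stopping not at the hyperplane $\cH_r$ but at the \emph{sphere} $\partial B(0,r)$: cover it with $k_r=O(r^{d-1})$ unit balls $B(x_i,1)$. The point is that every $x_i$ sits at distance exactly $r$ from the origin, so by rotational invariance all the $\bar Z^\go(0,x_i)$ are identically distributed with $\bbE\log\bar Z^\go(0,x_i)=-\alpha(r)$; Proposition~\ref{concentrat} plus a union bound over the $k_r$ balls then gives $\bbE[\log\max_i\bar Z^\go(0,x_i)]\le-\alpha(r)+L^{\chi+\gep/2}+o(1)$ directly, with no off-axis comparison needed. The second leg from $B(x_i,1)$ to $B(y_{r+l})$ is bounded deterministically by $e^{-2cl}$ since the remaining distance is at least $l-1$. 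This one-step argument replaces your entire hyperplane machinery; in fact the sphere argument \emph{is} the correct proof of the near-monotonicity you need (with error $L^{\chi+\gep/2}$, not $O(\log L)$), and once you have it the detour through $\gb$ buys nothing.
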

 
 \begin{proof}

Let us consider a family of ball $(B(x_i,1))_{i\in \{1,\dots, k_r\}}$, $x_i\in \partial B(0,r)$ with 
$k_r=O(r^{d-1})$ that cover the sphere $\partial B(0,r)$,
\begin{equation}\label{coc}
\partial B(0,r) \subset \bigcup_{i=1}^ {k_r}(B(x_i,1)).
\end{equation}
In order to reach $y_{r+l}$ starting from zero,
 a Brownian motion has to touch one of the $B(x_i,1)$ first (as it is shown on Figure \ref{machin4}) and therefore

\begin{multline}
 \bar Z^{\go}(0,y_{r+l}) \le \sum_{i=1}^{k_r}\bE\left[e^{-\int_0^{T_{B(x_i)}}(\gl +\bar V^\go(B_t))\dd t}\right.
 \\
 \left.\ind_{\{T_{B(x_i)}\le T_{B(y_{l+r})}\}}e^{-\int_{T_{B(x_i)}}^{T_{B(y_{l+r})}}(\gl +\bar V^\go(B_t))\dd t} \ind_{\{T_{B(y_{l+r})}<\infty\}}\right].
\end{multline}

\begin{figure}[hlt]\label{machin4}
\begin{center}
\leavevmode 
\epsfxsize =14 cm
\psfragscanon
\psfrag{O}{$O$}
\psfrag{xi}{$x_i$}
\psfrag{y+r}{$y_{l+r}$}
\epsfbox{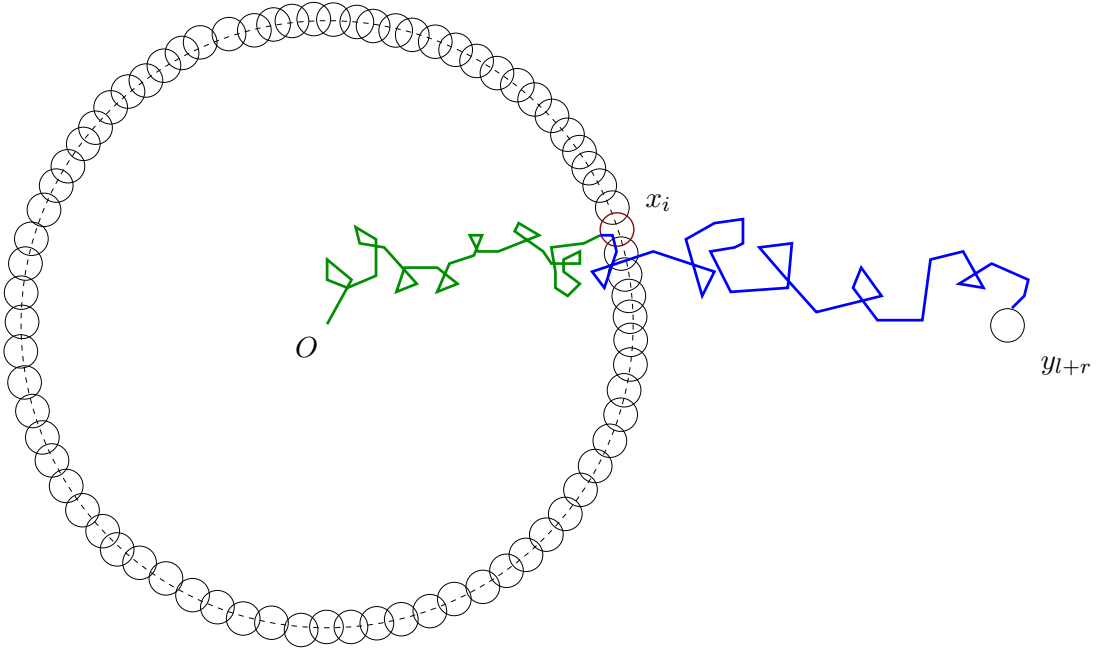}
\end{center}
\caption{In order to reach $B(y_{l+r})$, the Brownian motion starting from zero must first hit $\partial(B(0,r)$, and thus by \eqref{coc} it must it one of the $B(x_i)$.
This observation allows us to get an upper-bound on $\bar Z^\go (0,y_{r+l})$ in terms of $\bar Z^\go (x_i,y_{r+l})$ and  $\bar Z^\go (0,x_i)$.}
\end{figure}

Moreover
\begin{multline}
\bE\left[e^{-\int_0^{T_{B(x_i)}}(\gl +\bar V^\go(B_t))\dd t}
 \ind_{\{T_{B(x_i)}\le T_{B(y_{l+r})}\}}e^{-\int_{T_{B(x_i)}}^{T_{B(y_{l+r})}}(\gl +\bar V^\go (B_t))\dd t} \ind_{\{T_{B(y_{l+r})}<\infty\}}\right]\\
 \le \bE\left[e^{-\int_0^{T_{B(x_i)}}(\gl +\bar V^\go(B_t))\dd t} \ind_{\{T_{B(x_i)}<\infty\}}\right. \\ \left.
 \bE_{T_{B(x_i)}}\left[e^{-\int_{0}^{T_{B(y_{l+r})}}(\gl +\bar V^\go(B_t))\dd t} \ind_{\{T_{B(y_{l+r})}<\infty\}}
 \right]\right]\\
 \le  \bar Z^{\go}(0,x_i) 
\max_{z\in B(x_i,1)} \bar  Z^{\go}(z,y_{r+l}),
 \end{multline}
so that 
\begin{equation}\label{letetem}
 \bar Z^{\go}(0,y_{r+l}) \le 
\sum_{i=1}^{k_r} \bar Z^{\go}(0,x_i) 
\max_{z\in B(x_i,1)} \bar  Z^{\go}(z,y_{r+l}). 
\end{equation}
Now recall that 
\begin{equation}
\bar  Z^{\go}(z,y_{r+l})\le \bE\left[e^{-\gl T_{B(y_{r+l}-z)}}\ind_{\{T_{B(y_{r+l}-z)}<\infty\}}\right]
\le e^{-C_{\gl}(|y_{r+l}-z|-2)},
\end{equation}
for some constant $C_{\gl}$ (it follows from standard estimate for Brownian motion).
Then notice that for any choice of $z$ and $x_i$ one  has

 \begin{equation}
 |z-y_{r+l}|\ge |y_{r+l}|-|z|\ge |y_{r+l}|-(|z-x_i|+|x_i|)\ge l-1,
 \end{equation}
so that there exists a constant $c$ such that for all $l\ge L^{\chi+\gep}$, and $z\in B(x_i,1)$

\begin{equation}
 \bar  Z^{\go}(z,y_{r+l})\le e^{-2c l}.
\end{equation}
As a consequence 
\begin{equation}\label{matuv}
  \bar Z^{\go}(0,y_{r+l})\le (k_r e^{-2c l})\max_{i\in \{1,\dots, k_r\}} \bar Z^{\go}(0,x_i).
\end{equation}
The different $\bar Z^{\go}(0,x_i)$ are identically distributed.
 Thanks to Proposition \ref{concentrat} one can find a $\gd$ such that for all $L$ large enough
 \begin{equation}
 \bbP\left( \log \max_{i\in \{1,\dots, k_r\}} \bar Z^{\go}(0,x_i)-\alpha(r) \ge L^{\chi+(\gep/2)} \right)\le k_r e^{-L^{\gd}}.
 \end{equation}
 As we also have  that deterministically
 \begin{equation}
 \max_{i\in \{1,\dots, k_r\}} \bar Z^{\go}(0,x_i)\le 1.
 \end{equation}
This implies
\begin{equation}\label{truit}
\bbE\left[ \log  \max_{i\in \{1,\dots, k_r\}} \bar Z^{\go}(0,x_i)\right]\le -\alpha(r)+L^{\chi+\gep/2}+\alpha(r) k_r e^{-L^{\gd}}.
\end{equation}
Altogether by taking the expectation of $-\log$ of \eqref{matuv}
\begin{equation}
 \alpha(l+r)\ge \alpha(r)+2cl-\log k_r-\alpha(r) k_r e^{-L^{\gd}}-L^{\chi+\gep/2}\ge \alpha(r)+cl.
\end{equation}
where the last inequality holds when the assumption given in the Lemma for $r$ and $l$ are satisfied and $L$ is large enough.
 
 \end{proof}

\subsection{Proof of Theorem \ref{turiaf}}

The idea for the proof is the following: Set $|y|=L$,
according to Proposition \ref{micmac} it is sufficient to prove that
\begin{equation}
\bar \mu^{\go}_{0,y}((\mathcal A^{\xi}_y)^c)\Rightarrow 0, \text{ in probability when } |y|\to \infty.
\end{equation}
In order to to go out of $\mathcal C^\xi_y$ before hitting $B(y)$, 
$(B_t)_{t\ge 0}$ has to travel a longer distance that if it went in ``straight-line".
This extra distance traveled is at least of order $L^{2\xi-1}$.
Lemma \ref{witchi} allows to say that the cost of traveling is linear in the distance.
However doing this may bring some extra-energy reward by allowing to visit regions that are more favorable energetically. Proposition
\ref{concentrat} ensures that the energetic gain may not be more than $L^{\chi+\gep}$.
As with our choice of parameter
\begin{equation}
2\xi-1>\chi(\xi),
\end{equation}
the cost of extra-travel cannot be compensated by this energetic gain and this implies that 
the probability of $(\mathcal A^\xi_y)^c$ is small.
This is not too complicated to make this heuristic rigorous.

Our aim is to compare $\bar Z^\go (0,y)$ with 
\begin{equation}
Y_{0,y}=\bar Z^{\go}(0,y)\bar \mu^{\go}_{0,y}((\mathcal A^{\xi}_y)^c)= \bE\left[e^{-\int_0^{T_{B(y)}}(\gl +\bar V(B_t))\dd t}
\ind_{\{T_{\partial\mathcal C_y^\xi}<T_{B(y)}\}}\right].
\end{equation}

Let us consider a family of ball $(B(x_i,1))_{i\in \{1,\dots, m_L\}}$, $x_i\in \partial \mathcal C_y^{\xi}$ with $m_L=O( L^{(d-2)\xi+1})$ that satisfies 
\begin{equation}\label{alwaysc}
\partial \mathcal C_L^{\xi}\subset \bigcup_{i=1}^{ m_L}B(x_i,1).
\end{equation}
Trajectories in $(\mathcal A^{\xi}_y)^c$ have to hit one of the $B(x_i,1)$ before hitting $B(y)$ and therefore
 with a computation analogous to the one we made to obtain \eqref{letetem} (see figure \ref{machin2}), we get that
\begin{equation}
Y_{0,y}\le \sum_{i=1}^{m_L} \bar Z^\go(0,x_i)\max_{z\in B(x_i,1)} \bar Z^\go(z,y).
\end{equation}

\begin{figure}[hlt]\label{machin2}
\begin{center}
\leavevmode 
\epsfxsize =14 cm
\psfragscanon
\psfrag{O}{$O$}
\psfrag{xi}{$x_i$}
\psfrag{y}{$y$}
\psfrag{CLxi}{$\mathcal C_y^\xi$}
\epsfbox{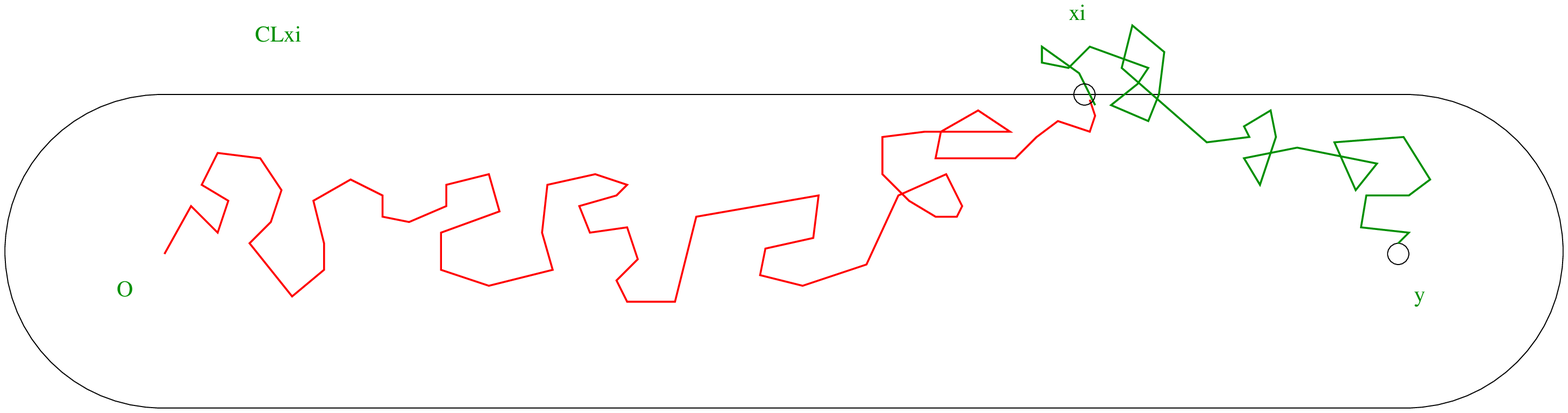}
\end{center}
\caption{If a trajectory does not belong to $\mathcal A_y^\xi$ then it has to hit $ \partial \mathcal C_y^{\xi}$ at some point 
before $T(B(y))$ and thus, by \eqref{alwaysc} it has to hit one of the $B(x_i)$.
This observation allows to get an upper bound on $Y^\go_{0,y}$.
}
\end{figure}
Note that one can find a constant $C$ such that for any $z\in B(x,1)$ (cf. (2.22) pp 225 in \cite{cf:S}),
\begin{equation}\label{tyu}
|\log \bar Z^\go(z,y)-\log \bar Z^\go(x,y)|\le C \log L.
\end{equation}
Moreover,
according to Proposition \ref{concentrat}, for any $\gep>0$ one has, for $L$ large enough,
 \begin{equation}\begin{split}\label{tyu1}
 \bbP\left( \exists i\in \{1,\dots, m_L\},\ \log \bar Z^{\go}(0,x_i)+\alpha(|x_i|) \ge L^{\chi+\gep} \right)&\le m_L e^{-L^{\gd}}\\
  \bbP\left( \exists  i\in \{1,\dots, m_L\},\ \log  \bar Z^{\go}(x_i,y)+\alpha(|y-x_i|) \ge L^{\chi+\gep} \right)&\le m_L e^{-L^{\gd}}
 \end{split}\end{equation}
so that combining \eqref{tyu} and \eqref{tyu1} one gets that with high probability

\begin{multline}\label{saei}
Y_{0,y}\le m_L e^{2L^{\chi(\xi)+\gep}+C\log L}\max_{i\in \{0,\dots, m_L\}}e^{-\alpha(|x_i|)- \alpha(|y-x_i|)}
\\ \le
 m_L e^{L^{\chi(\xi)+\gep}+C\log L}\max_{x\in \partial \mathcal C_L^\xi}e^{-\alpha(|x|)- \alpha(|y-x|)}.
\end{multline}
One also has that for any choice of $r\in [0,3L/4]$ (recall $L=|y|$), with large probability (using Proposition \ref{concentrat}
and \eqref{tyu})
\begin{equation}\label{cocose}
\bar Z(0,y)\ge \bar Z^\go (0,(r/L)y)\min_{z\in B((r/L)y,1)}\bar Z^\go(z,y)
\ge 
 e^{-2L^{\chi(\xi)+\gep}-C\log L}e^{-\alpha(r)-\alpha(L-r)}.
\end{equation}

Set  $x_0\in \argmin_ {x\in \partial \mathcal C_L^\xi}\alpha(|x|)+ \alpha(|y-x|)$. For large values of $L$, either $|x_0|\le 3L/4$ or $|y-x_0|\le 3L/4$ holds, and by symmetry one can assume that $|x_0|\le 3L/4$.
Then taking $r=|x_0|$ in \eqref{cocose} one obtains that with  probability going to one
\begin{equation}
\log \bar \mu^{\go}_{0,y}\left((\mathcal A^{\xi}_y)^c\right)=\log \frac{Y_{0,y}}{\bar Z^{\go}(0,y)} \le \alpha(L-|x_0|)-\alpha(|y-x_0|)+4L^{\chi+\gep}+C'\log L.
\end{equation}
Note that necessarily  $|y-x_0|-(L-|x_0|)\ge  L^{2\xi-1}$ for large $L$ as it is the case for any $x\in \partial C_y^\xi$ (see Figure \ref{machin3}).
\begin{figure}[hlt]
\begin{center}
\leavevmode 
\epsfxsize =14 cm
\psfragscanon
\psfrag{O}{$O$}
\psfrag{Cyxi}{$\mathcal C_y^\xi$}
\psfrag{Lxi}{$L^\xi$}
\psfrag{y}{$y$}
\psfrag{L-a}{$L-a$}
\psfrag{a}{$a$}
\epsfbox{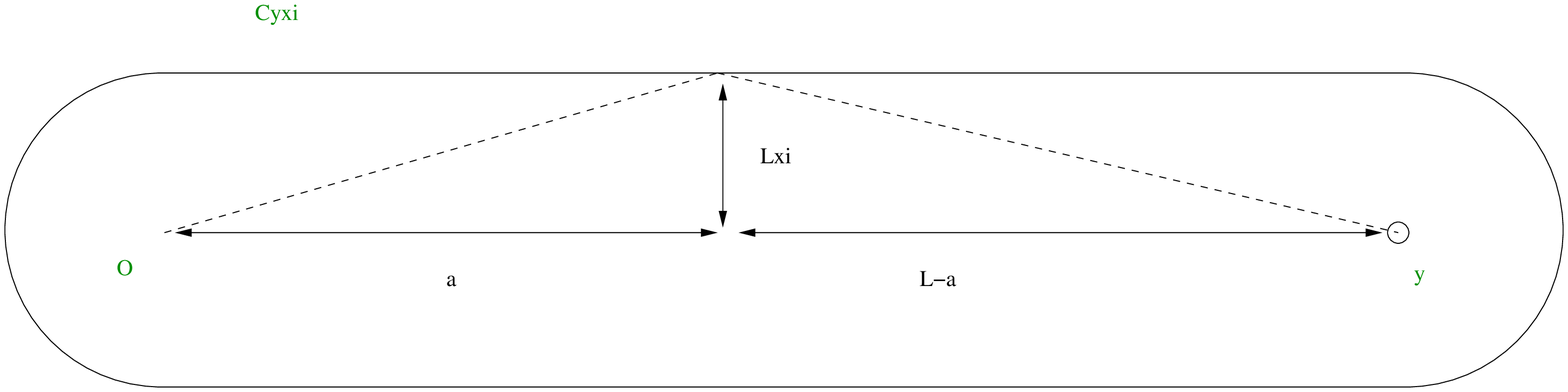}
\end{center}
\caption{\label{machin3} Suppose that $x$ is on $\partial \mathcal C_y^\xi$ for $|y|=L$ then
if $x$ is on the "cylindric" part then
$|y-x|+|x|=\sqrt{a^2+L^{2\xi}}+\sqrt{(L-a)^2+L^{2\xi}}\ge 2\sqrt{(L/2)^2+L^2\xi}= 2L^{2\xi-1}(1+o(1))$.
We let the reader check that this also holds when $x$ is on one of the ``hemispheres".
}
\end{figure}

With our choice of $\xi$
\begin{equation}
 2\xi-1>\chi(\xi)>0,
\end{equation}
so that one can use Lemma \ref{witchi} to get that 
\begin{equation}
 \alpha(L-|x|)-\alpha(|y-x|)\ge  c   L^{2\xi-1}.
\end{equation}
and hence 
\begin{equation}
\log \bar \mu^{\go}_{0,y}(\mathcal A^{\xi}_y)\le 4L^{\chi+\gep}+C'\log L- c   L^{2\xi-1}\le -\frac{c}{2}L^{2\xi-1}.
\end{equation}
\qed

\subsection{Proof of Theorem \ref{turiaf1}}\label{super2}

To treat the point to plane model needs a bit more care but the general idea is the same.
Thanks to Proposition \ref{micmac}, the result is proved if
$\bar \mu^{\go}_L((\mathcal A^{\xi}_L)^c)$ tends to zero in probability.
\medskip

Therefore our aim is to compare 

\begin{equation}
 Y^\go_{L}:=\bar Z^{\go}_L\bar \mu^{\go}_L((\mathcal A^{\xi}_L)^c\cap \mathcal S_L)
=\bE\left[e^{-\int_0^{T_{\mathcal H_L}}(\gl +\bar V(B_t))\dd t}\ind_{\{T_{\partial\mathcal C_L^\xi}<T_{\mathcal H_L}\}}
\right].
\end{equation}
with $\bar Z^{\go}_L$.
First one can remark that 
\begin{equation}
\bar Z^\go_L\ge \bar Z^\go(0,y_{L+1}),
\end{equation}
where $y_L=(L+1,0,\dots,0)$. Then one has to find a good upper bound on $Y_L$.

Consider a family of ball $(B(x_i,1))_{i\in \{0,\dots, m_L\}}$, $x_i\in \partial \mathcal C_L^{\xi}$ 
with $m_L= O(L^{1+\xi(d-2)})$ that satisfies 
\begin{equation}\label{alwaysc2}
\partial \mathcal C_L^{\xi}\subset \bigcup_{i\in \{0,\dots, m_L\}}B(x_i,1).
\end{equation}
Then for each $i$ set $r_{i,L}:= d(x_i,\mathcal H_L)$,  consider a family of balls 
$(B(y_{i,j},1))_{j\in \{0,\dots, n_{i,L}\}}$, with $y_{i,j}\in B(x_i,r_{i,l})$, $n_{i,L}=O(L^d)$
that cover entirely the bondary of $B(x_i,r_{i,l})$.
\begin{equation}\label{alwayssc3}
 \partial B(x_i,r_{i,L})\subset \bigcup_{j\in \{0,\dots, n_{i,L}\}}B(y_{i,j},1).
\end{equation}

%
%
%
Then one remarks that trajectories in $(\mathcal A^{\xi}_L)^c$ have to hit, first one of the $B(x_i,1)$
(they have to hit $\partial  \mathcal C_L^{\xi}$ first), then one of the $B(y_{i,j},1)$ (starting from $x_i$ one has 
to hit $\partial B(x_i,r_{i,L})$ before hitting $\mathcal H_L$ see Figure \ref{machin}),
so that
with a computation similar to the one made to obtain \eqref{letetem}, we obtain that
\begin{equation}
 Y_L\le \sum_{i\in \{0,\dots, m_L\}}\sum_{j\in \{0,\dots, n_{i,L}\}} \bar Z^\go(0,x_i)\max_{z\in B(x_i,1)}
\bar Z^\go(z,y_{i,j}),
\end{equation}
with the convention that $\bar Z^\go (a,b)=1$ if $|b-a|\le 1$.
\begin{figure}[hlt]\label{machin}
\begin{center}
\leavevmode 
\epsfxsize =14 cm
\psfragscanon
\psfrag{O}{$O$}
\psfrag{xi}{$x_i$}
\psfrag{L}{$L$}
\psfrag{CLxi}{$\mathcal C_L^\xi$}
\psfrag{Bxi}{$B(x_i,d(x_i,\mathcal H_L))$}
\psfrag{HL}{$\mathcal H_L$}
\epsfbox{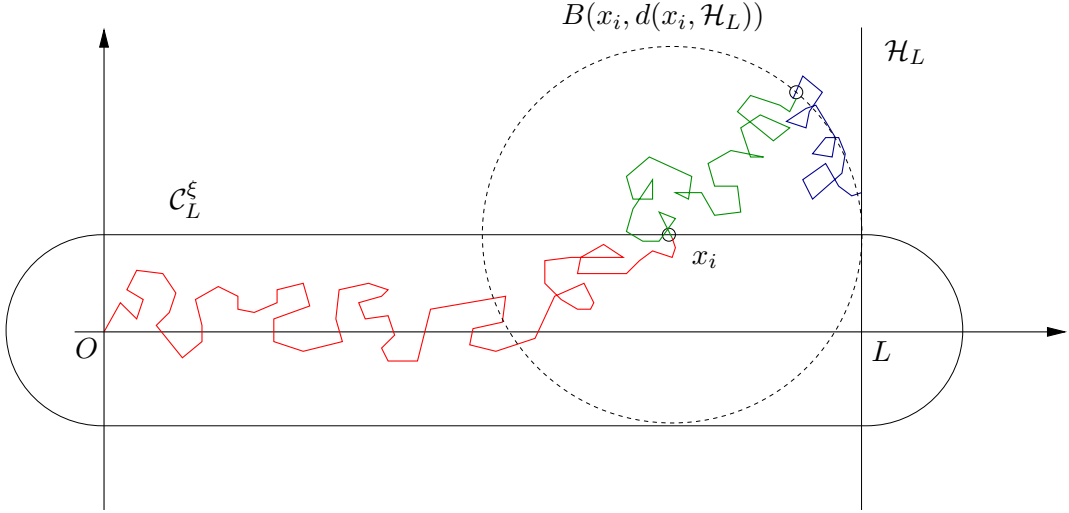}
\end{center}
\caption{If a trajectory does not belong to $\mathcal A_L^\xi$ then it has to hit $ \partial \mathcal C_y^{\xi}$ at some point 
before $T_{\mathcal H_L}$ and thus, by \eqref{alwaysc2} it must hit one of the $B(x_i)$. Then before hitting $\mathcal H_L$
is has to hit $\partial B(x_i,d(x_i,\mathcal H_L)$ (because of distance consideration) and thus, by \eqref{alwayssc3} one of the
$B(y_{i,j})$. We use this information to get an upper bound on $Y^\go_L$.}
\end{figure}
Then recall \eqref{tyu}
\begin{equation}
 \max_{z\in B(0,x_i)}\bar Z^\go(z,y_j)\le e^{c\log L} Z^\go(x_i,y_{i,j}),
\end{equation}
and that concentration inequalities from Proposition \ref{concentrat} tells us that with high-probability, all the 
$\log \bar Z^\go(0,x_i)$
and $\log \bar Z^\go(x_i,y_{i,j})$ are not further than $L^{\chi+\gep}$ away from their respective mean value, or more precisely

\begin{equation}
 \bbP\left( \exists i\in \{1,\dots, m_L\},\ \log \bar Z^{\go}(0,x_i)+\alpha(|x_i|) \ge L^{\chi+\gep} \right)\le m_L e^{-L^{\gd}},
\end{equation}
and
\begin{multline}
  \bbP\left( \exists  i\in \{1,\dots, m_L\}, \exists j\in \{0,\dots,n_{i,L}\} ,\
\log  \bar Z^{\go}(x_i,y_{i,j})+\alpha(|y_{i,j}-x_i|) \ge L^{\chi+\gep} \right)\\
\le m_L(\max_{i} n_{i,L}) e^{-L^{\gd}}.
 \end{multline}

Hence similarly to \eqref{saei} there exists a constant $C'$ such that with high probability
\begin{equation}
 \log Y^\go_L\le C' \log L+2L^{\chi+\gep}-
\min_{x\in \partial \mathcal C_L^{\xi}} \left(\alpha(|x|)+\alpha(d(x,\mathcal H_L))\right). 
\end{equation}
Consider $x_0\in \argmin_{x\in \partial \mathcal C_L^{\xi}} \alpha(|x|)+\alpha(d(x,\mathcal H_L))$.
Note that either $|x_0|$ or $d(x_0,\mathcal H_L)$ is smaller than $3L/4$. Suppose  $|x_0|\le 3/4L$
(the proof would work the same way in the other case).
For any $r\in [0,3L/4]$ one has that with high probability (cf. \eqref{cocose}),
\begin{equation}\label{cocose2}
\bar Z_L^{\go}\ge \bar Z^\go(0,y_{L+1})\ge
 e^{-2L^{\chi(\xi)+\gep}-C\log L}e^{\alpha(|x_0|)+\alpha(L+1-|x_0|)},
\end{equation}
so that 

\begin{equation}
\log \bar \mu^{\go}_{0,L}\left((\mathcal A^{\xi}_L)^c\right)=\log Y_L^\go/\bar Z_L^\go\le
\alpha(L+1-|x_0|)-\alpha(d(x_0,\mathcal H_L))+4L^{\chi+\gep}+C'\log L.
\end{equation}
From geometric consideration as $x_0\in \partial \mathcal C_L^\xi$ one has
\begin{equation}
 |x_0|+d(x_0,\mathcal H_L)\ge \sqrt{L^2+L^{2\xi}}\ge L+\frac{1}{4}L^{2\xi-1}.
\end{equation}
So that from lemma \ref{witchi} (as $2\xi-1>\chi(\xi)$) 
\begin{equation}
  \alpha(d(x_0,\mathcal H_L))-\alpha(L+1-|x_0|)\ge c\left(\frac{1}{4}L^{2\xi-1}-1\right).
\end{equation}
and hence with high probability, provided $\gep$ is small enough
\begin{equation}
\log \bar \mu^{\go}_{0,L}\left((\mathcal A^{\xi}_L)^c\right)\le -c\left(\frac{1}{4}L^{2\xi-1}-1\right)+4L^{\chi+\gep}+C'\log L\le -\frac{cL^{2\xi-1}}{8}.
\end{equation}

\qed

%
%

%
%
%

\appendix

\section{Technical estimates}

We present here the proof of two technical statement. The first one, Lemma \ref{techin}, is 
the fact that with our setup, each $x$ in $B(0,L^2)$ lies in at most $\log L$ different traps 
with high probability.

\medskip

The second statement Lemma \ref{tuture} is that under our Gibbs measure, $B_t$ does not visit too many different cubes of side-length $l$.

\begin{lemma}\label{techin}
One has  that for all $L$ large enough, 
\begin{equation}
\bbP\left[  \max_{x\in B(0,L^2)} \left(\sum_{i=1}^\infty \ind_{\{| x-\go_i | \le r_i\}}\right)\ge \log L\right]
\le \frac{1}{L}
\end{equation}

\end{lemma}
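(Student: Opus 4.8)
The plan is to reduce the uniform statement over $x\in B(0,L^2)$ to a finite union bound by a monotone domination argument, and then to use that the Poisson upper tail at level $\log L$ decays faster than any power of $L$.

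First I would record that for a \emph{fixed} $x\in\bbR^d$ the quantity $N(x):=\sum_{i=1}^\infty\ind_{\{|x-\go_i|\le r_i\}}$ is exactly the number of points of the Poisson process $\go$ falling in the Borel set $\{(\go,r)\in\bbR^d\times\bbR_+ : |x-\go|\le r\}$, hence is a Poisson random variable of mean
\[
\mu_0:=\int_0^\infty \sigma_d\, r^d\,\nu(\dd r).
\]
This is finite precisely because $\alpha>d$: the tail condition $\nu([r,\infty])=r^{-\alpha}$ gives $\int_1^\infty r^d\,\nu(\dd r)=\alpha/(\alpha-d)<\infty$, and the mass of $\nu$ on $[0,1]$ contributes at most $\sigma_d$. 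So $\mu_0$ is a finite constant depending only on $\alpha$ and $d$.

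To pass from a single point to the whole ball $B(0,L^2)$, I would cover it by unit balls $B(z_j,1)$, $j=1,\dots,J_L$, with $z_j$ on a fixed grid and $J_L=O(L^{2d})$. If $x\in B(z_j,1)$ and $x\in B(\go_i,r_i)$, then $|z_j-\go_i|\le |z_j-x|+|x-\go_i|\le 1+r_i$, so that
\[
\max_{x\in B(0,L^2)}N(x)\le\max_{1\le j\le J_L}\tilde N(z_j),\qquad \tilde N(z_j):=\sum_{i=1}^\infty\ind_{\{|z_j-\go_i|\le r_i+1\}}.
\]
Each $\tilde N(z_j)$ is a Poisson variable of mean $\mu_1:=\int_0^\infty\sigma_d(r+1)^d\,\nu(\dd r)$, still finite for the same reason ($\alpha>d$), and $\mu_1$ depends only on $\alpha,d$.

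Finally I would combine a union bound with the Chernoff estimate $\bbP[\mathrm{Poisson}(\mu)\ge k]\le e^{-\mu}(e\mu/k)^k\le(e\mu/k)^k$: for $L$ large enough that $\log L>\mu_1$,
\[
\bbP\Big[\max_{x\in B(0,L^2)}N(x)\ge\log L\Big]\le J_L\Big(\frac{e\mu_1}{\log L}\Big)^{\log L}\le C L^{2d}\Big(\frac{e\mu_1}{\log L}\Big)^{\log L}.
\]
Taking logarithms, $\log\!\big(C L^{2d+1}(e\mu_1/\log L)^{\log L}\big)=\log C+(2d+1)\log L+(\log L)\big(1+\log\mu_1-\log\log L\big)$, and the bracketed factor tends to $-\infty$, so the whole quantity tends to $-\infty$; in particular the displayed probability is $\le 1/L$ for all sufficiently large $L$, which is the claim. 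There is no genuine obstacle here: the only points requiring care are the finiteness of $\mu_0$ and $\mu_1$ (this is exactly what the hypothesis $\alpha>d$ buys), and the observation that the Poisson tail at height $\log L$ is super-polynomially small, hence absorbs the $O(L^{2d})$ cost of discretizing the ball.
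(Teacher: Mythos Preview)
Your proof is correct and follows essentially the same route as the paper: cover $B(0,L^2)$ by $O(L^{2d})$ unit balls, dominate $\max_{x\in B(z,1)}N(x)$ by the Poisson variable $\tilde N(z)=\sum_i\ind_{\{|z-\go_i|\le r_i+1\}}$ of finite mean (using $\alpha>d$), then use that the Poisson tail at level $\log L$ is super-polynomially small so that the union bound costs nothing. The paper is terser at the last step (it simply says ``this is enough to conclude''), whereas you spell out the Chernoff bound; one minor remark is that under the paper's convention $\nu([1,\infty))=1$, so $\nu$ has no mass on $[0,1)$ and your aside about that contribution is unnecessary.
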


\begin{proof}
Note that it is sufficient to show that 
\begin{equation}
\bbP\left[  \max_{x\in B(0,1)} \sum_{i=1}^\infty \ind_{\{| x-\go_i | \le r_i\}}\ge \log L\right]\le \frac 1 {L^{2d+2}}
\end{equation}
Indeed, one can cover up $B(0,L^2)$ with $O(L^{2d})$ balls of radius one, and use union bound and translation invariance.
Then we remark that $\max_{x\in B(0,1)}\dots$ is less than
\begin{equation}
\left(\sum_{i=1}^\infty \ind_{\{| \go_i | \le r_i+1\}}\right)
\end{equation}
which is a Poisson variable whose mean is
\begin{equation}
\int_{1}^\infty \alpha r^{-\alpha-1} \sigma_d (r+1)^d \dd r<\infty,
\end{equation}
this is enough to conclude.
\end{proof}

For $l\ge 0$,
$x\in \bbZ^d$ define $C_x:= lx +[0,l]^d$ and $\tilde C_x:=\bigcup_{y\in C_y}B(x,l)$.
For $T\ge 0$ define
\begin{equation}
A_T:=| \{ x\in \Z^d \ | \ \tilde C_x\cap \{ B_t\ ,\ t\in [0,T]\}|,
\end{equation}
the number of $\tilde C_x$ that are visited by $(B_{t})_{t\in[0,T]})$.
Scaling properties of the Brownian motion implies that $A_T$ is typically of order $O(T/l^2)$ 
(and smaller than this when $B$ is recurent, i.e.\ for $d=1,2$).
We investigate large deviation of $A_T$ above its typical value

\begin{lemma}
There exist a constant $C$ such that if $nl^2/T\ge C$ then
\begin{equation}
\bP\left[ A_T\ge n \right]\le e^{-\frac{n^2l^2}{4CT}}.
\end{equation}

\end{lemma}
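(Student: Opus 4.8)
The plan is to reduce the event $\{A_T\ge n\}$ to a lower-tail estimate for a sum of i.i.d.\ Brownian exit times, and then apply a Chernoff bound. Set $\sigma_0:=0$ and, for $i\ge0$,
\[
\sigma_{i+1}:=\inf\{t>\sigma_i\ :\ |B_t-B_{\sigma_i}|\ge l\}.
\]
By the strong Markov property the increments $(\sigma_{i+1}-\sigma_i)_{i\ge0}$ are i.i.d., each distributed as $\tau:=\inf\{t>0:|B_t|\ge l\}$ under $\bP_0$. Let $M:=\max\{i\ge0:\sigma_i\le T\}$ (finite a.s.). On each interval $[\sigma_i,\sigma_{i+1}\wedge T]$ the path stays in $\bar B(B_{\sigma_i},l)$, hence it can meet $\tilde C_x$ only for those $x$ with $C_x\cap\bar B(B_{\sigma_i},2l)\ne\emptyset$, of which there are at most a constant $K_d$ depending only on $d$. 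Since $[0,T]=\bigcup_{i=0}^{M}[\sigma_i,\sigma_{i+1}\wedge T]$, this gives the deterministic bound $A_T\le K_d(M+1)$. Consequently, for $n\ge2K_d$ (for smaller $n$ the stated inequality is of interest only when $T\lesssim l^2$, a regime not used later, and is obtained by the same argument run on a finer partition), $\{A_T\ge n\}\subset\{M\ge m\}$ with $m:=\lceil n/(2K_d)\rceil$, and
\[
\bP[M\ge m]=\bP[\sigma_m\le T]=\bP\Big[\textstyle\sum_{i=1}^{m}(\sigma_i-\sigma_{i-1})\le T\Big].
\]

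Next I would bound the Laplace transform of $\tau$. By Brownian scaling $\tau\overset{d}{=}l^2\tau^{(1)}$ with $\tau^{(1)}$ the exit time of the unit ball, and the reflection principle gives $\bP_0[\tau<t]\le C_0\exp(-l^2/(2dt))$ with $C_0=C_0(d)$; integrating $\lambda e^{-\lambda t}$ against this tail, a standard computation yields
\[
\bE_0\big[e^{-\lambda\tau}\big]\le C_0\,\exp\!\big(-c_0\sqrt{\lambda l^2}\,\big)\qquad\text{whenever }\lambda l^2\ge1,
\]
for constants $c_0,C_0$ depending only on $d$ (and $\bE_0[e^{-\lambda\tau}]\le1$ always). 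Inserting this into the Chernoff bound $\bP[\sum_{i=1}^m(\sigma_i-\sigma_{i-1})\le T]\le e^{\lambda T}\,(\bE_0 e^{-\lambda\tau})^m$ and optimising over $\lambda$ — the optimal value being $\lambda=c_0^2l^2m^2/(4T^2)$, which indeed satisfies $\lambda l^2\ge1$ because $ml^2/T\ge C/(2K_d)$ is large under the hypothesis $nl^2/T\ge C$ — gives
\[
\bP\Big[\textstyle\sum_{i=1}^{m}(\sigma_i-\sigma_{i-1})\le T\Big]\le C_0^{\,m}\exp\!\Big(-\frac{c_0^2\,m^2 l^2}{4T}\Big).
\]

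Finally I would absorb the prefactor $C_0^m$: since $m^2l^2/T=m\cdot(ml^2/T)\ge m\cdot C/(2K_d)$, taking the constant $C$ large enough (in terms of $d,c_0,C_0$ only) forces $m\log C_0\le\tfrac12\cdot\tfrac{c_0^2}{4}\cdot\tfrac{m^2l^2}{T}$, so that the right-hand side is at most $\exp(-c_0^2m^2l^2/(8T))$; substituting $m\ge n/(2K_d)$ and enlarging $C$ once more so that $1/(4C)\le c_0^2/(32K_d^2)$ yields $\bP[A_T\ge n]\le\exp(-n^2l^2/(4CT))$, as claimed. The only genuinely delicate point is the geometric step of the first paragraph: one must bound the number of fattened cubes gained per excursion by a constant independent of $l$, so that the rate comes out proportional to $n^2l^2/T$ — a crude estimate through the diameter of the range would only give the far weaker exponent $n^{2/d}l^2/T$. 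The Laplace-transform estimate and the Chernoff optimisation are otherwise routine.
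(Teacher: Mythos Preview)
Your proof is correct and follows essentially the same route as the paper: the paper defines the same sequence of stopping times (calling them $\cT_i$), observes that at most $5^d$ fattened cubes are visited on each excursion, reduces to $\bP[\cT_n\le T]$, bounds the Laplace transform of $\cT_1$ by $e^{-l\sqrt{u}}$ for $l^2u$ large, and then optimises the Chernoff bound to obtain $e^{-n^2l^2/(4T)}$. Your version is in fact a bit more careful about the constant prefactor $C_0^m$ from the Laplace-transform bound and about how it is absorbed into the exponent via the hypothesis $nl^2/T\ge C$, whereas the paper suppresses this point.
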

\begin{proof}

Set $\cT_0:=0$ and

\begin{equation}
\cT_{n+1}:=\inf\{t\ge \cT_{n},\ |B_t-B_{\cT_n}|\ge l\}.
\end{equation}
Note that in the interval $(\cT_n,\cT_{n+1})$ the Brownian motion cannot visit more than $5^d$ different $\tilde C_x$, and therefore

\begin{equation}
\bP\left[ A_T\ge 5^d n \right]\le \bP\left[ \cT_n \le T \right].
\end{equation}
To estimate the second term, one uses Chernov inequality and therefore, the first step is to compute the Laplace transform of $\cT_1$

\begin{multline}
\bE\left[e^{-u \cT_1}\right]=\int_0^\infty u e^{-u t}\bP\left[ \cT_1 \ge t \right]\dd t
\le 4d \int_0^\infty u e^{-u t}\int_{l/ \sqrt{t}}^{\infty}\frac{1}{\sqrt{2\pi}}e^{-\frac{x^2}{2}}\dd x \dd t
\\
\le C \int_0^{\infty} \frac{\sqrt{t}}{l}e^{- u t-\frac{l^2}{2t}}\dd t \le e^{-l\sqrt{u}},
\end{multline}
where the last inequality holds if $l^2 u$ is large enough, say larger than a constant $C$.

\begin{equation}
\bP\left[ \cT_n \le T \right]\le \inf_{u\ge 0} (\bP\left[e^{-u \cT_1+uT/n}\right])^n
\le  \inf_{u\ge C/l^2} (\bP\left[e^{-l\sqrt{u}+uT/n }\right])^n=e^{-\frac{n^2l^2}{4T}}
\end{equation}
where the last equality holds provided $nl^2/T$ is large.
\end{proof}

We use the previous estimate to get a (rather rough) bound on the tail distribution of $A_{T_{B(v)}}$ under $\bar \mu^\go_{0,v}$.
\begin{lemma}\label{tuture}
There exists a constant $C$ such that for all $L$ large enough and all $|v|\in \bbR^d$,
for all $l\ge 1$ and  for all $n\ge \frac{C|v|\log L }{l}$ 
\begin{equation}
\bar \mu^\go_{0,v}\left[ A_{T_{B(v)}}\ge n\right]\le e^{\frac{-nl}{C}}
\end{equation}
\end{lemma}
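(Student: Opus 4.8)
The plan is to bound the $\bar\mu^\go_{0,v}$-probability of visiting many cubes by converting it into an annealed-type estimate, using the previous lemma as the input on the underlying Brownian motion. First I would write
\[
\bar\mu^\go_{0,v}\left[A_{T_{B(v)}}\ge n\right]
=\frac{1}{\bar Z^\go(0,v)}\,\bE_0\!\left[e^{-\int_0^{T_{B(v)}}(\gl+\bar V^\go(B_t))\dd t}\,\ind_{\{A_{T_{B(v)}}\ge n\}}\right]
\le \frac{1}{\bar Z^\go(0,v)}\,\bE_0\!\left[e^{-\gl T_{B(v)}}\,\ind_{\{A_{T_{B(v)}}\ge n\}}\right],
\]
discarding the nonnegative potential in the exponent. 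The denominator is handled by the standard tubular estimate already invoked in the proof of Proposition \ref{micmac}: since $\bar V^\go\le \log L$ everywhere, $\log\bar Z^\go(0,v)\ge -C|v|\log L$ for $L$ large, so $1/\bar Z^\go(0,v)\le e^{C|v|\log L}$.

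Next I would control the numerator. The event $\{A_{T_{B(v)}}\ge n\}$ forces $A_T\ge n$ for $T=T_{B(v)}$, and on the complementary event $\{T_{B(v)}\ge t\}$ one pays $e^{-\gl t}$; the point is to trade the combinatorial constraint against the time constraint. Splitting on whether $T_{B(v)}$ is large or small: for a threshold $t_\ast$ proportional to $nl^2$ (so that $nl^2/t_\ast\ge C$ is the regime of the previous lemma), I would write
\[
\bE_0\!\left[e^{-\gl T_{B(v)}}\,\ind_{\{A_{T_{B(v)}}\ge n\}}\right]
\le \bP_0\!\left[A_{t_\ast}\ge n\right]+e^{-\gl t_\ast}.
\]
Here the first term uses monotonicity of $A$ in $T$ together with $\bP_0[A_{T_{B(v)}}\ge n,\ T_{B(v)}<t_\ast]\le \bP_0[A_{t_\ast}\ge n]$, and the second bounds $e^{-\gl T_{B(v)}}\ind_{\{T_{B(v)}\ge t_\ast\}}\le e^{-\gl t_\ast}$. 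By the previous lemma, $\bP_0[A_{t_\ast}\ge n]\le e^{-n^2 l^2/(4C t_\ast)}$, which for $t_\ast \asymp n l^2$ becomes $e^{-c' n}$; and $e^{-\gl t_\ast}=e^{-c'' n l^2}\le e^{-c'' n l}$ since $l\ge 1$. So the numerator is at most $2 e^{-c\,n l}$ after relabeling constants (using $l\ge 1$ again to absorb the worse of the two exponents into the $nl$ scale).

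Combining, $\bar\mu^\go_{0,v}[A_{T_{B(v)}}\ge n]\le 2\,e^{C|v|\log L}\,e^{-c\,nl}$, and this is $\le e^{-nl/C}$ once $c\,nl - C|v|\log L \ge nl/C + \log 2$, i.e.\ once $n\ge \tilde C|v|\log L/l$ for a suitable constant $\tilde C$, which is exactly the stated hypothesis. The main obstacle I anticipate is choosing the threshold $t_\ast$ so that both resulting exponents land on the same $nl$ scale while staying inside the range $nl^2/T\ge C$ required by the preceding lemma; this is just bookkeeping with the constants, but it has to be done carefully since $l$ can be as small as $1$ and as large as a power of $L$, so one cannot afford slack that depends on $l$. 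I would also double-check that the tubular lower bound on $\bar Z^\go(0,v)$ holds with a constant uniform in $v$ with $|v|\le 2L$ (it does, by the same estimate used in Section \ref{modpot}), since that uniformity is what lets the final threshold $n\ge C|v|\log L/l$ be stated cleanly.
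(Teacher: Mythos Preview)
Your overall structure matches the paper's proof: pull out $1/\bar Z^\go(0,v)\le e^{C|v|\log L}$ via the tubular estimate, split the numerator at a time threshold, apply the previous lemma on one piece and the factor $e^{-\gl t_\ast}$ on the other, then absorb $e^{C|v|\log L}$ using the hypothesis $n\ge C|v|\log L/l$. The only issue is your choice of threshold, and it is a genuine gap.

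With $t_\ast\asymp nl^2$ the previous lemma yields
\[
\bP_0[A_{t_\ast}\ge n]\le \exp\!\left(-\frac{n^2l^2}{4C\,t_\ast}\right)\asymp e^{-c'n},
\]
which is only $e^{-c'n}$, not $e^{-c'nl}$. The inequality $l\ge 1$ goes the wrong way here: it gives $n\le nl$, hence $e^{-c'n}\ge e^{-c'nl}$, so you cannot ``absorb the worse of the two exponents into the $nl$ scale''. Your combined bound is then at best $e^{C|v|\log L}\,e^{-c'n}$, and since $l$ ranges up to a power of $L$ in the application, this does not imply the stated $e^{-nl/C}$.

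The correct balance, which is what the paper does, is $t_\ast = nl/C$ (one power of $l$, not two). Then $nl^2/t_\ast = Cl \ge C$ because $l\ge 1$, so the previous lemma still applies and gives
\[
\bP_0[A_{t_\ast}\ge n]\le \exp\!\left(-\frac{n^2l^2}{4C\,t_\ast}\right)=e^{-nl/4},
\]
while $e^{-\gl t_\ast}=e^{-\gl nl/C}$. Now both terms are genuinely on the $nl$ scale, and the remainder of your argument goes through verbatim.
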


\begin{proof}
First recall that via standard tubular estimates for Brownian Motion, one can prove that almost surely, for all $v$ 
\begin{equation}
\log \bar Z^\go_{v}\ge -C  |v| \log L. 
\end{equation}

And therefore
\begin{equation}
\mu_{0,v}(T_{B(v)}\le T)\le e^{-\gl T+ |v| \log L}. 
\end{equation}
On the other hand if $nl^2/T\ge C$
\begin{equation}
\bar \mu ^\go_{0,v}\left[ A_{T}\ge n\right] \le \frac{1}{\bar Z^\go_y}\bP\left[A_T\ge n\right]
\le e^{|v|\log L-\frac{n^2l^2}{4T}}.
\end{equation}
Altogether one has that
\begin{equation}
\bar \mu ^\go_{0,v}\left[ A_{T_{B(v)}}\ge n\right]\le \bar \mu ^\go_{0,v}\left[ A_{T}\ge n\right]+\bar \mu ^\go_{0,v}\left[ T_{B(v)}\ge T\right]
\le e^{C|v| \log L}\left( e^{-\gl T}+ e^{-\frac{n^2l^2}{4TC}}\right).
\end{equation}
were the last inequality is valid when $T\le nl^2/C$. Taking $T=nl/C$ one gets that
\begin{equation}
\bar \mu ^\go_{0,v}\left[ A_{T_{B(v)}}\ge n\right] \le e^{Cv|\log L|}(e^{\frac{\gl n l}{C}}+e^{-\frac{nl}{4}})\le e^{\frac{\gl n l}{2C}}
\end{equation}
where the last inequality holds if $C$ is large enough and $n\ge \frac{2C^2|v|\log L }{l}$.

\end{proof}

\end{document}